\theoremstyle{plain}
\newtheorem{prob}{Problem}
\newtheorem{teor}{Theorem}[section]
\newtheorem{prop}[teor]{Proposition}
\newtheorem{cor}[teor]{Corollary}
\newcommand{\adef}{\begin{defn}}
\newcommand{\zdef}{\end{defn}}
 \newtheorem{defn}[teor]{Definition}
\newcommand{\cl}{\mathcal}
\newtheorem{lemma}[teor]{Lemma}
\theoremstyle{definition}
\newcommand{\lop}{\ensuremath{\curvearrowright}}
\theoremstyle{remark}
\newcommand{\sn}{\sum_{k=1}^n}
\newcommand{\R}{\mathbb{R}}
\newcommand{\C}{\mathbb{C}}
\newcommand{\N}{\mathbb{N}}
\newcommand{\seq}{\ensuremath{0\to Y\to X\to Z\to 0\:}}
\newcommand{\aproof}{\begin{proof}}
\newcommand{\zproof}{\end{proof}}
\newcommand\PO{{\mathrm{PO}}}
\newcommand{\To}{\longrightarrow}
\author{Jes\'us M. F. Castillo}
\address{Departamento de Matem\'aticas, Universidad de
Extremadura, Avenida de Elvas s/n, 06071 Badajoz, Espa\~{n}a} \email{castillo@unex.es}
\author{ Wilson Cuellar} \address{Instituto de Matem\'atica e Estadistica, Universidade de S\~{a}o Paulo, R. do Mat\~{a}o, 1010 - Butanta, S\~{a}o Paulo, Brazil} \email{cuellar@ime.usp.br}
\author{Valentin Ferenczi} \address{Departamento de Matem\'atica, Instituto de Matem\'atica e
Estat\'\i stica, Universidade de S\~ao Paulo, rua do Mat\~ao 1010,
05508-090 S\~ao Paulo SP, Brazil, and Equipe d'Analyse Fonctionnelle, Institut de Math\'ematiques de Jussieu,
Universit\'e Pierre et Marie Curie - Paris 6, Case 247, 4 place Jussieu,
75252 Paris Cedex 05, France} \email{ferenczi@ime.usp.br}
\author{Yolanda Moreno} \address{Departamento de Matem\'aticas, Escuela Polit\'ecnica, Universidad de Extremadura,
Avenida de la Universidad s/n, 10071 C\'aceres, Espa\~{n}a} \email{ymoreno@unex.es}
\thanks{The research of the first and fourth authors has been supported in part by project MTM2013-45643-C2-1-P, Spain. The second and third author were supported by Fapesp, grants 2010/17512-6, 2013/11390-4, 2014/25900-7}
\begin{document}

\title{Complex structures on twisted Hilbert spaces}

\maketitle

\begin{abstract} We investigate complex structures on twisted Hilbert spaces, with special attention paid to the Kalton-Peck $Z_2$ space and to the hyperplane problem. We consider (nontrivial) twisted Hilbert spaces generated by centralizers obtained from an interpolation scale of K\"othe function spaces. We show there are always complex structures on the Hilbert space that cannot be extended to the twisted Hilbert space. If, however, the scale is formed by rearrangement invariant K\"othe function spaces then there are complex structures on it that can be extended to a complex structure of the twisted Hilbert space.  Regarding the hyperplane problem we show that no complex structure on $\ell_2$ can be extended to a complex structure on an hyperplane of $Z_2$ containing it.\end{abstract}

\section{Introduction}

A twisted Hilbert space is a Banach space $X$ admitting a subspace $H_1$ isomorphic to a Hilbert space and so that the quotient $X/H_1$ is also isomorphic to a Hilbert space $H_2$. Or else, using the homological language, a Banach space $X$ that admits an exact sequence

$$\begin{CD} 0@>>> H_1 @>>> X @>>> H_2 @>>> 0\end{CD}$$in which both $H_1$ and $H_2$ are Hilbert spaces. The space $X$ is usually denoted $H_1 \oplus_\Omega H_2$ and the so-called quasi-linear map $\Omega$ is there to specify the form in which the norm of the direct product must be ``twisted". The most interesting example for us is the Kalton-Peck space $Z_2$ \cite{kaltpeck}, which is  the twisted Hilbert space associated to the quasi-linear map (defined on finitely supported sequences as)
  $$ \mathscr K \left (\sum_i x_ie_i \right ) = \sum_i x_i \left ( \log \frac{|x_i|}{\|x\|_2}\right )e_i $$
with the understanding that $\log(|x_i|/ \|x\|_2)=0$ if $x_i=0$.\medskip

This paper deals with different aspects of the existence of complex structures on twisted Hilbert spaces and in particular on $Z_2$. In the first place, one finds the\medskip

\textbf{Existence problem:}  Does every twisted Hilbert space admit a complex structure?\medskip

But of course one can be more specific. \medskip

\textbf{Extension problem:}  Given a twisted Hilbert space $H_1 \oplus_\Omega H_2$, can every complex structure defined on $H_1$ (resp. $H_2$) be extended (resp. lifted) to a complex structure on $H_1 \oplus_\Omega H_2$?\medskip

Our motivation for the study of the extension problem is the hyperplane problem, which has its origins in Banach's book \cite{B}, and asks whether  every closed hyperplane (i.e., 1-codimensional closed subspace) of any infinite dimensional Banach space is linearly isomorphic to the whole space. The first space conjectured not to be isomorphic to its hyperplanes was precisely the space $Z_2$, and whether it is so is still an open problem. Banach's hyperplane problem was solved by W.T. Gowers in 1994, \cite{G}, with  the  construction of a Banach space with unconditional basis  not isomorphic to any of its proper subspaces. His space is an unconditional version of the first hereditarily indecomposable (in short H.I.) space constructed by Gowers and Maurey \cite{GM}; after Gowers' result,   Gowers and Maurey showed  that their space --as, in fact, any H.I. space-- cannot be isomorphic to any of its proper subspaces. Since then many other examples "with few operators" and consequently not isomorphic to their hyperplanes, have appeared, including Argyros-Haydon's space on which every operator is a compact perturbation of a multiple of the identity \cite{AH}. In comparison to those examples the Kalton-Peck space has a relatively simple and natural construction;  as will be recalled later, it also appears as naturally induced by the interpolation scale of $\ell_p$ spaces. This motivates our study of the hyperplane problem in this setting. Our quest for an isomorphic invariant to distinguish between $Z_2$ and its hyperplanes lead us to consider whether the hyperplanes of $Z_2$ admit complex structures; or else, in terms of Ferenczi-Galego  \cite{feregale}, we investigate whether $Z_2$ is  "even".\medskip

The answers we present for the existence and extension problems are the following. For twisted Hilbert spaces generated by centralizers obtained from an interpolation scale of rearrangement invariant K\"othe spaces, we observe a positive answer to the existence problem. For non-trivial twisted Hilbert spaces generated by centralizers obtained from an interpolation scale of K\"othe spaces we provide a negative answer to the extension problem. Regarding the hyperplane problem we show that no complex structure on $\ell_2$ can be extended to a complex structure on an hyperplane of $Z_2$ containing it; more generally, that hyperplanes of $Z_2$ do not admit, in our terminology, compatible complex structures.

\section{Complex structures on interpolation scales and twisted sums}

\subsection{Complex structures} A complex structure on a real Banach space $X$  is a complex Banach space $Z$ which is $\R$-linearly isomorphic to $X$.   Let $Z$ be a complex structure on $X$ and $T:X \to Z$ be  a $\R$-linear isomorphism; then $X$ can be seen as a complex space where  the multiplication by the complex number $i$ on vectors of $X$ is given by the operator $ \tau x= T^{-1}(iTx)$ which is clearly  an automorphism on $X$ satisfying $\tau^2=-id_X$.   Conversely, if there exists a linear operator  $\tau$ on $X$ such that $\tau^2=-id_X$, we can define on $X$    a  $\mathbb C$-linear structure by declaring a new law for the scalar multiplication: if  $\lambda, \mu \in \mathbb R$ and $x\in X$, then $(\lambda + i\mu).x= \lambda x + \mu \tau(x)$. The resulting complex Banach space will be denoted by $X^\tau$.

\adef A real Banach space $X$ admits a complex structure if there is a bounded linear operator $
u: X\to X$ such that $u^2 = - id_X$. Two complex structures $u, v$ on $X$ are equivalent if there is a linear automorphism $\phi$ of $X$ such that  $u = \phi v \phi^{-1}$ or, equivalently, when $X^u$ and $X^v$ are $\C$-linearly isomorphic.
\zdef


 The first example in the literature of a real Banach space that does not admit a complex structure was the  James space, proved by Dieudonn\'e \cite{D}. Other  examples of spaces without complex structures are the uniformly convex space constructed by Szarek \cite{S2} or the hereditary indecomposable space of  Gowers and Maurey  \cite{GM}.
 At the other extreme, we find the classical spaces $\ell_p$ or $L_p$, which admit a unique complex structure up to $\C$-linear isomorphism. A simple proof of this fact was provided by N.J. Kalton and appears in  \cite[Thm. 22]{FG}. In the case of $\ell_p$ spaces we will call $\omega$ the following complex structure(s):
 let $\sigma: \N\to \N$ be the permutation $\sigma = (2,1)(4,3) \dots (2n, 2n-1) \dots $ and then set
$$\omega((x_n)) = ((-1)^n x_{\sigma(n)}).$$

In addition to that, there are currently known examples of spaces admitting  exactly $n$ complex structures (Ferenczi \cite{F}), infinite countably many (Cuellar \cite{C}) and uncountably many (Anisca \cite{A}).

 Most of the action in this paper will take place in the ambient of K\"othe  functions spaces over a $\sigma$-finite measure space $(\Sigma, \mu)$ endowed with their $L_\infty$-module structure; a particular case of which is that of Banach spaces with a 1-unconditional basis (called Banach sequence spaces in what follows) with their associated $\ell_\infty$-structure. We will denote by $L_0$ (resp. $L_p$) the space of all $\mu$-measurable (resp. measurable and $p$-integrable) functions, without further explicit reference to the underlying measure space. In the case of $L_p$ spaces, a complex structure can be obtained by fixing a partition $(\Sigma_n)_n$ of $\Sigma$ on positive measure sets for which there is measure preserving bijection $a_n$ between $\Sigma_{n}$ and $\Sigma_{\sigma(n)}$ for all $n$; then for $f= \sum f_{|\Sigma_n} 1_{\Sigma_n} $, set
 \begin{equation}\label{rispace}\omega(f) =\sum (-1)^n ( f_{|\Sigma_{\sigma(n)}}\circ a_n) 1_{\Sigma_n}.\end{equation}

\subsection{Exact sequences, quasi-linear maps and centralizers}

For a rather complete background on the theory of twisted sums see \cite{castgonz}. We recall that  a twisted sum of two Banach spaces $Y$,  $Z$ is  a quasi-Banach space $X$ which has a closed subspace isomorphic to $Y$ such that the quotient $X/Y$ is isomorphic to $Z$. Equivalently, $X$ is a twisted sum of $Y$, $Z$ if there exists a short exact sequence
$$\begin{CD}
0@>>>  Y@>>> X @>>> Z@>>> 0.\end{CD}$$

According to Kalton  and Peck \cite{kaltpeck}, twisted sums  can be identified with homogeneous maps $\Omega: Z \to Y$ satisfying
\[ \| \Omega (z_1+z_2) - \Omega z_1- \Omega z_2\| \leq C(\|z_1\|+ \|z_2\|),\]
which are called  quasi-linear maps, and induce an equivalent quasi-norm on $X$ (seen
algebraically as $Y \times Z$) by $$\|(y,z)\|_\Omega=\|y-\Omega z\| +\|z\|.$$

When $Y$ and $Z$ are, for example,  Banach spaces  of non-trivial type, the quasi-norm above is equivalent to a norm;  therefore, the twisted sum obtained is a Banach space. And when $Y$ and $Z$ are both isomorphic to $\ell_2$, the twisted sum is called a {\em  twisted Hilbert space}. Two exact sequences $0 \to Y \to X_1 \to Z \to 0$ and $0 \to Y \to X_2 \to Z \to 0$
are said to be {\it equivalent} if there exists an operator $T:X_1\to X_2$ such that the following
diagram commutes:
$$
\begin{CD}
0 @>>>Y@>>>X_1@>>>Z@>>>0\\
&&@| @VVTV @|\\
0 @>>>Y@>>>X_2@>>>Z@>>>0.
\end{CD}$$
The classical 3-lemma (see \cite[p. 3]{castgonz}) shows that $T$ must be an isomorphism.
An exact sequence is trivial if and only if it is equivalent to
$0 \to Y \to Y \oplus Z \to Z \to 0$, where $Y\oplus Z$ is endowed with the product norm.
In this case we say that the exact sequence \emph{splits.} Two quasi-linear maps $\Omega, \Omega': Z \to Y$ are said to be equivalent, denoted $\Omega\equiv \Omega'$,
if the difference $\Omega-\Omega'$ can be written as $B +L$, where $B: Z \to Y$ is a homogeneous bounded
map (not necessarily linear) and $L: Z \to Y$ is a linear map (not necessarily bounded).
Two quasi-linear maps are equivalent if and only if the associated exact sequences
are equivalent. A quasi-linear map is trivial if it is equivalent to the $0$ map,  which also means that  the associated exact sequence is trivial. Given two Banach spaces $Z,Y$ we will denote by $\ell(Z,Y)$ the vector space of linear (not necessarily continuous) maps $Z\to Y$. The distance between two homogeneous maps $T,S$ will be the usual operator norm (the supremum on the unit ball) of the difference; i.e., $\|T-S\|$, which can make sense even when $S$ and $T$ are unbounded. So a quasi-linear map $\Omega: Z \To Y$ is trivial if and only if $d(\Omega,\ell(Z,Y))\leq C <+\infty$, in which case we will say that $\Omega$ is $C$-trivial.\medskip

Given an exact sequence \seq with associated quasi-linear map $\Omega$ and an operator
$\alpha: Y\to Y'$, there is a commutative diagram
\begin{equation}\label{po}
\begin{CD}
0 @>>>Y@>>>Y\oplus_\Omega Z @>>>Z@>>>0 \\
&&@V{\alpha}VV @VV\alpha'V @|\\
0 @>>>Y'@>>> \PO @>>>Z@>>>0\end{CD}
\end{equation}
whose lower sequence  is called the \emph{push-out sequence} and the space $\PO$ is called the \emph{push-out space}: just set $\PO = Y' \oplus_{\alpha \Omega} Z$
and $\alpha'(y, z)  = (\alpha y, z)$.

\subsection{Compatible complex structures on twisted sums} The approach to complex structures as operators allows us to consider their extension/lifting properties. The commutator notation will be helpful here. Recall that given two maps $A,B$ for which what follows makes sense, their commutator is defined as $[A,B] = AB -BA$. Given three maps $A, \Omega, B$ for which what follows makes sense, we define their ``commutator" as $[A,\Omega, B] = A\Omega  - \Omega B$.

\adef  Let $Z,Y$ be Banach spaces and $\Omega: Z \to Y$ be  a quasi-linear map. Let  $\tau $ and $\sigma $ be two operators on $Y$ and $Z$, respectively. The couple $(\tau, \sigma)$ is compatible with $\Omega$ if $\tau\Omega \equiv\Omega \sigma$.
\zdef

The following are well-known equivalent formulations to compatibility.

\begin{lemma}\label{crit} The following are equivalent:
\begin{itemize}
\item $(\tau, \sigma)$ is compatible with $\Omega$.
\item $[\tau, \Omega, \sigma]\equiv 0$.
\item $\tau$ can be extended to an operator $\beta: Y\oplus_\Omega Z \to Y\oplus_\Omega Z$ whose induced operator on the quotient space is $\sigma$.
\item $\sigma$ can be lifted to an operator $\beta: Y\oplus_\Omega Z \to Y\oplus_\Omega Z$ whose restriction to the subspace is $\tau$.
    \item There is a commutative diagram
\begin{equation}\label{compa}\begin{CD}
0@>>> Y @>>>  Y \oplus_\Omega Z @>>>  Z @>>> 0 \\
&&@V{\tau}VV @VV{\beta}V @VV{\sigma}V\\
0@>>> Y @>>> Y \oplus_{\Omega }Z @>>>  Z @>>> 0;
\end{CD}\end{equation}
\end{itemize}
\end{lemma}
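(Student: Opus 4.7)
The plan is to observe that the equivalence is essentially a standard translation between the cocycle language of quasi-linear maps and the diagrammatic language of exact sequences, so once the crucial link between (2) and (5) is established the rest is unpacking of definitions. Indeed, (1)$\Leftrightarrow$(2) is tautological, since $[\tau,\Omega,\sigma]=\tau\Omega-\Omega\sigma$ and $\equiv$ is the right equivalence relation; and (3)$\Leftrightarrow$(4)$\Leftrightarrow$(5) amounts to reading off the two squares of diagram (\ref{compa}), the left one saying that $\beta$ restricts to $\tau$ on the subspace $Y$ and the right one saying that $\beta$ induces $\sigma$ on the quotient $Z$.

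For the implication (2)$\Rightarrow$(5) I would decompose $\tau\Omega-\Omega\sigma=B+L$ with $B:Z\to Y$ bounded homogeneous and $L:Z\to Y$ linear, and take as candidate
$$\beta(y,z):=(\tau y - Lz,\,\sigma z).$$
Linearity is immediate, and boundedness in the quasi-norm $\|(y,z)\|_\Omega=\|y-\Omega z\|+\|z\|$ follows from a short computation in which the relation $\tau\Omega-\Omega\sigma=B+L$ collapses the first coordinate into $\tau(y-\Omega z)+Bz$, after which the bounds on $\tau$ and $B$ close the estimate. That $\beta$ extends $\tau$ on $Y$ and induces $\sigma$ on $Z$ is visible from the formula.

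Conversely, for (5)$\Rightarrow$(2) I would argue that any $\beta$ fitting in the diagram must, by linearity together with the extension and quotient conditions, be of the form $\beta(y,z)=(\tau y+\gamma z,\,\sigma z)$ for some linear (but not necessarily bounded) map $\gamma:Z\to Y$; indeed $\beta(y,0)=(\tau y,0)$ by the extension property, while $\beta(0,z)$ must have second coordinate $\sigma z$ by the quotient property, and linearity then gives the decomposition. Testing $\beta$ on the homogeneous bounded section $z\mapsto(\Omega z,z)$ (which has quasi-norm $\|z\|$), boundedness of $\beta$ forces the homogeneous map $B:=\tau\Omega+\gamma-\Omega\sigma$ to be bounded, whence $\tau\Omega-\Omega\sigma=B-\gamma$ is exactly the sum of a bounded homogeneous and a linear map required by $\equiv 0$. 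No serious obstacle is expected; the only point worth flagging is that $\gamma$ need not be bounded on its own, but precisely the combination $\tau\Omega+\gamma-\Omega\sigma$ is, which is exactly what the equivalence relation $\equiv$ is designed to accommodate.
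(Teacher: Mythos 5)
Your argument is correct and follows essentially the same route as the paper, whose proof consists precisely of the decomposition $\tau\Omega-\Omega\sigma=B+L$ and the explicit formula $\beta(y,z)=(\tau y+Lz,\sigma z)$ (your sign $-Lz$ is the one that actually makes the first coordinate collapse to $\tau(y-\Omega z)+Bz$; the paper's $+Lz$ amounts to renaming $L$ as $-L$ in the decomposition). The paper leaves the converse and the remaining equivalences as "well known", whereas you supply them, correctly, via the observation that any $\beta$ fitting the diagram has the form $(y,z)\mapsto(\tau y+\gamma z,\sigma z)$ and testing it on the section $z\mapsto(\Omega z,z)$.
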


\begin{proof} That $[\tau, \Omega, \sigma] \equiv 0$ means that $\tau \Omega - \Omega \sigma = B + L$ for some homogeneous bounded map $B$ and some linear map $L$. In which case, the operator $\beta (y,z) = (\tau y + Lz, \sigma z)$ is linear and continuous, and makes the diagram (\ref{compa}) commutative.\end{proof}

The last commutativity condition can be formulated without an explicit reference to any associated quasi-linear map. So, $(\tau, \sigma)$ make a commutative diagram
$$\begin{CD}
0@>>> Y @>>>  X @>>>  Z @>>> 0 \\
&&@V{\tau}VV @VV{\beta} V @VV{\sigma}V\\
0@>>> Y @>>> X @>>>  Z @>>> 0
\end{CD}$$
if and only $(\tau, \sigma)$ is compatible with any quasi-linear map $\Omega$ associated to the exact sequence.\medskip

A simple computation shows that if,  moreover, $\tau, \sigma$ are complex structures then the operator $\beta$ defined above is a complex structure, $\beta^2=-id$, if and only if
$\tau L + L \sigma =0$. Note also that when $(\tau, \sigma)$ is compatible with $\Omega$,
the decomposition $\tau \Omega - \Omega \sigma = B+L$ above is not unique; which makes worthwhile the following corollary.

\begin{cor}\label{bcommu} Let $\tau$ and $\sigma$ be complex structures defined on Banach spaces $Y$ and $Z$, respectively.  Let $\Omega: Z \to Y$ be  a quasi-linear map. If $[\tau,\Omega, \sigma]$ is linear or bounded then $\tau$ can be extended to a complex structure $\beta: Y\oplus_\Omega Z \to Y\oplus_\Omega Z$ whose induced operator on the quotient space is $\sigma$. \end{cor}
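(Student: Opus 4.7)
The strategy is to apply Lemma \ref{crit} together with the observation made in the paragraph just above the corollary, namely that any extension $\beta$ produced from a decomposition $[\tau,\Omega,\sigma] = B + L$ (with $B$ homogeneous bounded and $L$ linear) satisfies $\beta^2 = -id$ if and only if $\tau L + L\sigma = 0$. Since the decomposition $B+L$ is not unique, the task reduces to exhibiting, under each of the two alternative hypotheses, a particular decomposition for which this anticommutation identity holds.

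In the case where $[\tau,\Omega,\sigma]$ happens to be bounded, I would take $B = [\tau,\Omega,\sigma]$ and $L = 0$. Then the identity $\tau L + L\sigma = 0$ is trivial, and the corresponding $\beta$ is simply the diagonal map $(y,z) \mapsto (\tau y, \sigma z)$, a complex structure by inspection.

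In the case where $[\tau,\Omega,\sigma]$ is linear, I would take the opposite extreme decomposition $B = 0$ and $L = \tau\Omega - \Omega\sigma$ (here $L$ is permitted to be unbounded, which is exactly the generality allowed by Lemma \ref{crit}). A direct computation using $\tau^2 = -id_Y$ and $\sigma^2 = -id_Z$ gives
\[
\tau L + L\sigma = \tau(\tau\Omega - \Omega\sigma) + (\tau\Omega - \Omega\sigma)\sigma = \tau^2\Omega - \tau\Omega\sigma + \tau\Omega\sigma - \Omega\sigma^2 = -\Omega + \Omega = 0,
\]
so again Lemma \ref{crit} supplies the required complex structure $\beta$ extending $\tau$ and inducing $\sigma$ on the quotient.

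The only conceptual point — really more of an observation than an obstacle — is recognizing that the nonuniqueness of the decomposition $B+L$ gives enough flexibility to push all of $[\tau,\Omega,\sigma]$ into a single summand, after which the complex-structure condition $\tau L + L\sigma = 0$ either vanishes automatically or reduces to the telescoping identity above.
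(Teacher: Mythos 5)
Your argument is correct and is essentially identical to the paper's proof: in both cases you choose the same extreme decompositions ($B=[\tau,\Omega,\sigma]$, $L=0$ when it is bounded; $L=[\tau,\Omega,\sigma]$, $B=0$ when it is linear) and verify $\tau L + L\sigma = 0$ by the same telescoping computation, then invoke the observation following Lemma \ref{crit}. Nothing further is needed.
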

\begin{proof} If  $[\tau,\Omega, \sigma]$ is linear, set  $L = [\tau,\Omega, \sigma]$ and $B=0$.
This makes $\tau L + L \sigma = \tau(\tau\Omega - \Omega \sigma) + (\tau\Omega- \Omega \sigma)\sigma = -\Omega  - \tau\Omega \sigma + \tau\Omega \sigma + \Omega = 0$, and apply the observation after Lemma \ref{crit} above. If $[\tau, \Omega, \sigma]$ is bounded then set $B = [\tau, \Omega, \sigma]$ and $L=0$. \end{proof}

Observe that in the case in which $[\tau, \Omega, \sigma]$ is bounded, the associated complex structure $\beta$ on $Y\oplus_\Omega Z$ appearing in the corollary is $\beta(y, z) = (\tau y, \sigma z)$ and thus it will be denoted as $\beta=(\tau, \sigma)$. Such is the natural situation that occurs when working with centralizers, a crucial notion we shall now recall.

\adef An $L_\infty$-centralizer on a K\"othe function space $X$ (resp. an $\ell_\infty$-centralizer on a Banach sequence space) is a quasi-linear map $\Omega: X \to L_0$  such that  there is a constant $C$  satisfying that,  for every $f\in L_\infty$ (resp. $\ell_\infty$) and for every $x\in X$, the difference $\Omega(fx)- f\Omega(x)$ belongs to $X$ and
$$ \| \Omega(fx)- f\Omega(x)\|_X\leq C\|f\|_\infty  \|x\|_X. $$
\zdef
Observe that a centralizer on $X$ does not take values in $X$, but in $L_0$. It is however true that for all $x,y\in X$ one has $\Omega(x+y) - \Omega(x) - \Omega(y) \in X$. For this reason it induces an exact sequence
$$\begin{CD}
0@>>>  X@>j>> d_\Omega X @>q>> X@>>> 0\end{CD}$$
as follows: $d_\Omega X= \{ (w, x) : w\in L_0, x\in X: w - \Omega x\in X\}$, with
quasi-norm $$\|(w,x)\|_{d_\Omega X}=\|x\|_X+\|w- \Omega x\|_X$$ and with obvious inclusion $j(x) = (x, 0)$ and quotient map $q(w,x)=x$. We will use the notation $\Omega: X \lop X$ instead of $\Omega: X \To L_0$ to describe that fact. The induced  sequence is trivial if and only if there is a linear map $L: X\to L_0$ so that $\Omega - L: X\to X$ is bounded. Centralizers arise naturally induced by interpolation, as we describe next.

\subsection{Complex interpolation and twisted sums}\label{complex} The complex interpolation method we follow is that of  \cite{Bergh-Lofstrom}. A couple of complex Banach spaces $(X_0,X_1)$ is said to be  compatible  if   their intersection $\Delta$ is dense in each of the spaces and both  spaces can be continuously  embedded  in a topological vector  space $V$.  We can form their sum $\Sigma= X_0+X_1 \subset V$ endowed with the norm $$\| x\|_{\Sigma}= \inf  \|x_0\|_0 + \|x_1\|_1,$$ where the infimum  is taken  over all representations of $x=x_0+x_1$ with $x_i\in X_i$ (i=0,1). Let $\mathbb S$ denote the closed strip $\{z\in \mathbb C: 0\leq \Re z\leq 1\}$ in the complex plane,
and let $\mathbb S^\circ$ be its interior.  We denote by $\cl{H}=\cl H(X_0,X_1)$  the space of functions $g: \mathbb S \to \Sigma$  satisfying the following conditions:

\begin{enumerate}
\item $g$ is $\|\cdot\|_\Sigma$-bounded and $\|\cdot\|_\Sigma$-continuous on $\mathbb S$,
and $\|\cdot\|_\Sigma$-analytic on $\mathbb S^\circ$;

\item $g(it)\in X_0$ for each $t\in\R$, and the map $t\in\R\mapsto g(it)\in X_0$ is bounded
and continuous;

\item $g(it+1)\in X_1$ for each $t\in\R$, and the map $t\in\R\mapsto g(it+1)\in X_1$ is bounded
and continuous;
\end{enumerate}
The space $\cl{H}$ is a Banach space under the norm
$\|g\|_\cl{H} = \sup\{\|g(j+it)\|_j: j=0,1; t\in\R \}$.
For $\theta\in[0,1]$, define the interpolation space
$$
X_\theta=(X_0,X_1)_\theta=\{x\in\Sigma: x=g(\theta) \text{ for some } g\in\cl H\}
$$
with the norm $\|x\|_\theta=\inf\{\|g\|_H: x=g(\theta)\}$.
Let $\delta_{\theta}$ denote  the evaluation map $\delta_{\theta}: \cl H \to \Sigma$ given by $\delta_{\theta}(g)=g(\theta)$, which is bounded by the definition of $\cl H$.
So $X_\theta$ is the quotient of $\cl{H}$ by $\ker\delta_\theta$,
and thus it is a Banach space.\medskip

We will focus on the Schechter's  version of the complex method of interpolation which uses as interpolators  the maps $\delta_{\theta}$ and  the evaluation of  the derivative at $\theta$, $\delta_{\theta}' : \cl H \to \Sigma$  given by $\delta_{\theta}'  (g)=g'(\theta)$.
The key of the   Schechter method is  the following  result  (see \cite[Theorem 4.1]{carro}):

\begin{lemma}\label{mechanism}
$\delta'_\theta: \ker \delta_\theta\to X_\theta$ is bounded and onto for $0<\theta<1$.
\end{lemma}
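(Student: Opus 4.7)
The plan is to prove boundedness and surjectivity separately, in each case reducing to a manipulation that stays inside $\cl H$. The key observation is that division of $g \in \ker\delta_\theta$ by $(z-\theta)$ produces $g'(\theta)$, while the inverse procedure---multiplication by $(z-\theta)$ together with a Gaussian regulariser---turns any representative $f$ of $x \in X_\theta$ into an element of $\ker\delta_\theta$ whose derivative at $\theta$ equals $x$. To prove boundedness I would take $g \in \ker\delta_\theta$ and set $h(z) := g(z)/(z-\theta)$. Since $g(\theta) = 0$ and $g$ is analytic on $\mathbb S^\circ$, the singularity at $\theta$ is removable and $h(\theta) = g'(\theta)$. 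On each boundary line $\{\mathrm{Re}\,z = j\}$ the inequality $|j+it-\theta| \geq d := \min(\theta, 1-\theta)$ yields $\|h(j+it)\|_j \leq d^{-1} \|g(j+it)\|_j$, so the boundary conditions (2)--(3) in the definition of $\cl H$ transfer from $g$ to $h$. For condition (1), $h$ is $\Sigma$-continuous on $\mathbb S$ (it extends analytically at $\theta$) and $\|h(\sigma+it)\|_\Sigma \leq \|g(\sigma+it)\|_\Sigma/|\sigma+it-\theta| \to 0$ as $|t| \to \infty$ by $\Sigma$-boundedness of $g$; combined with local boundedness this gives $\Sigma$-boundedness of $h$ on $\mathbb S$. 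Thus $h \in \cl H$ with $\|h\|_{\cl H} \leq d^{-1}\|g\|_{\cl H}$, and
$$\|\delta'_\theta(g)\|_\theta = \|h(\theta)\|_\theta \leq \|h\|_{\cl H} \leq d^{-1}\|g\|_{\cl H}.$$

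For surjectivity, given $x \in X_\theta$ and $\varepsilon > 0$ I would pick $f \in \cl H$ with $f(\theta) = x$ and $\|f\|_{\cl H} \leq \|x\|_\theta + \varepsilon$, and put
$$g(z) := (z-\theta)\, e^{(z-\theta)^2}\, f(z).$$
A direct computation yields $g(\theta) = 0$ and $g'(\theta) = f(\theta) = x$, so $\delta'_\theta(g) = x$. The scalar factor $\varphi(z) := (z-\theta) e^{(z-\theta)^2}$ satisfies $|\varphi(\sigma+it)| = |\sigma-\theta+it|\, e^{(\sigma-\theta)^2 - t^2}$, so its Gaussian decay in $t$ overwhelms the linear growth and $\varphi$ is bounded on $\mathbb S$ by some constant $C = C(\theta)$. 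Multiplication by $\varphi$ preserves analyticity on $\mathbb S^\circ$ together with both $\Sigma$- and $X_j$-boundedness/continuity, so $g \in \ker\delta_\theta$ with $\|g\|_{\cl H} \leq C\|f\|_{\cl H}$; in particular $\delta'_\theta(\ker\delta_\theta) = X_\theta$.

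The main piece of bookkeeping is in the boundedness direction: verifying condition (1) of the $\cl H$ definition, namely $\Sigma$-boundedness on the whole closed strip, for $h$. This reduces to the elementary observation that $h$ is $\Sigma$-continuous on $\mathbb S$ (analytic at $\theta$) with $\Sigma$-norm vanishing at infinity, so no Phragm\'en--Lindel\"of principle is actually needed. Surjectivity is then mechanical once the correct regulariser $e^{(z-\theta)^2}$ is chosen to absorb the unwanted polynomial factor produced by multiplication by $z-\theta$.
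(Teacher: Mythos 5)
Your argument is correct and is essentially the standard one: the paper does not prove this lemma itself but delegates it to \cite[Theorem 4.1]{carro}, and your two steps (dividing by $z-\theta$ for boundedness, multiplying by $(z-\theta)e^{(z-\theta)^2}$ for surjectivity) are exactly the classical Rochberg--Weiss/Carro--Cerd\`a--Soria manipulations. The bookkeeping you carry out (removability of the singularity at $\theta$, the lower bound $|j+it-\theta|\ge\min(\theta,1-\theta)$ on the boundary lines, and the Gaussian absorbing the linear growth of $z-\theta$) is precisely what is needed, so the proof stands as written.
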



Lemma \ref{mechanism} provides the connection with exact sequences and twisted sums
through the following push-out diagram:
\begin{equation}\label{basic}\begin{CD}
0@>>> \ker\delta_\theta @>i_\theta>> \mathcal H @>\delta_\theta >> X_\theta @>>>0\\
&&@V\delta_\theta' VV @VVV @| \\
0@>>> X_\theta @>>> \PO @>>> X_\theta@>>> 0
\end{CD}
\end{equation}
whose lower row is obviously a twisted sum of $X_\theta$. Thus, if  $B_\theta: X_\theta \to \mathcal H$ is  a bounded
homogeneous selection for $\delta_\theta$, and  $L_\theta: X_\theta \to \mathcal H$ is  a linear selection, the quasi-linear  map naturally associated to the push-out sequence is $\delta_\theta'(B_\theta- L_\theta): X_\theta \to \Sigma$. Hence, $\PO \simeq X_\theta \oplus_{\delta_\theta'(B_\theta- L_\theta)} X_\theta.$  Clearly, the  quasi-linear map  depends on the choice  of $B_\theta$ and $L_\theta$.  However, if $\delta_\theta'(\tilde B_\theta- \tilde L_\theta)$ is another associated quasi-linear map then their difference is the sum of a bounded plus a linear map. This is what makes more interesting to work with the centralizer $\delta_\theta' B_\theta: X_\theta \to \Sigma$ and the induced exact sequence
$$\begin{CD} 0@>>> X_\theta @>>> d_{\delta_\theta' B_\theta}X_\theta @>>> X_\theta@>>> 0,
\end{CD}$$
(which is equivalent to the one induced by the quasi-linear map $\delta_\theta'(B_\theta- L_\theta)$ since the diagram

\begin{equation}\begin{CD}
0@>>> X_\theta @>>> X_\theta \oplus_{\delta_\theta'(B_\theta- L_\theta)} X_\theta @>>> X_\theta @>>>0\\
&&@| @VTVV  @| \\
0@>>> X_\theta @>>>  d_{\delta_\theta'B_\theta}X_\theta @>>> X_\theta@>>> 0
\end{CD}
\end{equation}

is commutative setting $T(x,z) = (x - \delta_\theta'L_\theta z, z)$). Indeed, for any other choice $\delta_\theta' \tilde B_\theta$ the difference $\delta_\theta' \tilde B_\theta - \delta_\theta' B_\theta$ is bounded:

\begin{equation}\label{select}
\|\delta'_\theta(\tilde B_\theta - B_\theta)x\|_{X_\theta}\leq
\|{\delta_\theta'}_{|\ker \delta_\theta}\| (\|\tilde B_\theta\| +\|B_\theta\|)\|x\|_{X_\theta}.
\end{equation}

We will from now on denote the centralizer $\delta_\theta' B_\theta$ as $\Omega_\theta$ independently of the choice of $B_\theta$. Thus, we are ready to prove a result similar to the bounded case of Corollary \ref{bcommu}, for which we only need one more definition.  An operator $T: X_{\theta} \to X_{\theta}$  is said to be an operator \emph{on the scale} if $T: \Delta \to \Delta$ and  $T: \Sigma \to \Sigma$ are both continuous; which, by classical results of interpolation theory,  necessarily makes $T: X_\theta \to X_\theta$ continuous too. Note that if $w-\Omega_\theta x \in X_\theta$ then since $\Omega_\theta x$ belongs to $\Sigma$, $w$ must belong to $\Sigma$. This means that $d_{\Omega_\theta}$ may be represented as
 $$d_{\Omega_\theta} X_\theta= \{ (w, x) : w\in \Sigma, x\in X_\theta: w - \Omega x\in X_\theta\}.$$ In this setting, for $\tau$ (resp. $\sigma$) defined on $\Sigma$ (resp. $X_\theta$),
$(\tau, \sigma): d_{\Omega_\theta} X_\theta \To \Sigma \times X_\theta$ is defined by $(\tau, \sigma) (w,x)=(\tau(w), \sigma(x))$. One then has:

\begin{prop} \label{comint} Let $(X_{\theta})$ be an interpolation scale. If $u: X_{\theta} \to X_{\theta}$ is a complex structure on the scale then
$$(u,u): d_{\Omega_{\theta}} X_{\theta}  \To d_{\Omega_{\theta}} X_{\theta} $$  is a  complex structure.
\end{prop}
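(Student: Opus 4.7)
The plan is to reduce the problem to showing that the commutator $[u,\Omega_\theta,u] = u\Omega_\theta - \Omega_\theta u$ is a bounded map from $X_\theta$ to $X_\theta$. Once this is established, Corollary \ref{bcommu} applied with $B = [u,\Omega_\theta,u]$ and $L = 0$ produces a complex structure extending $u$ over the quotient $u$; the explicit recipe given right after Lemma \ref{crit} shows that this extension is precisely $(w,x) \mapsto (uw, ux)$ on $d_{\Omega_\theta} X_\theta$, i.e.\ the operator $(u,u)$. The identity $(u,u)^2 = -\mathrm{id}$ is then automatic from $u^2 = -\mathrm{id}$.

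To control the commutator, I would lift $u$ to an operator $L_u : \cl H \to \cl H$ acting pointwise by $(L_u g)(z) = u(g(z))$. Because $u$ is on the scale, $u$ acts continuously on $\Sigma$ (hence $L_u g$ is again $\Sigma$-bounded, $\Sigma$-continuous on $\mathbb S$ and, being the composition of an analytic $\Sigma$-valued function with a bounded linear map on $\Sigma$, $\Sigma$-analytic on $\mathbb S^\circ$), and continuously on $X_0$ and $X_1$, so the boundary conditions defining $\cl H$ are preserved with $\|L_u g\|_{\cl H} \leq M\|g\|_{\cl H}$ for $M = \max\{\|u : X_0 \to X_0\|, \|u : X_1 \to X_1\|\}$. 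From the pointwise definition, the crucial intertwining identities
$$\delta_\theta \circ L_u = u \circ \delta_\theta, \qquad \delta'_\theta \circ L_u = u \circ \delta'_\theta$$
are immediate.

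Now fix a bounded homogeneous selection $B_\theta : X_\theta \to \cl H$ for $\delta_\theta$, so $\Omega_\theta = \delta'_\theta B_\theta$. Both $L_u B_\theta x$ and $B_\theta(u x)$ are elements of $\cl H$ satisfying $\delta_\theta(L_u B_\theta x) = u(x) = \delta_\theta(B_\theta u x)$, so their difference lies in $\ker\delta_\theta$, with
$$\|L_u B_\theta x - B_\theta u x\|_{\cl H} \leq \bigl(M + \|u\|_{X_\theta \to X_\theta}\bigr) \|B_\theta\|\, \|x\|_{X_\theta}.$$
Invoking Lemma \ref{mechanism}, which gives boundedness of $\delta'_\theta|_{\ker\delta_\theta} : \ker\delta_\theta \to X_\theta$, we obtain
$$[u,\Omega_\theta,u] x = \delta'_\theta\bigl(L_u B_\theta x - B_\theta u x\bigr),$$
so $[u,\Omega_\theta,u]$ is bounded from $X_\theta$ to $X_\theta$, as required.

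The only delicate step is the construction of $L_u$ and the verification that it actually maps $\cl H$ to itself; this is exactly what the hypothesis ``on the scale'' was tailored to guarantee. The rest is pure diagram-chasing through the push-out diagram (\ref{basic}) and the already codified extension criterion of Corollary \ref{bcommu}.
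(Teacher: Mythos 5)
Your proof is correct and follows essentially the same route as the paper: the paper also reduces the statement to the boundedness of $[u,\Omega_\theta]$ (invoking the Coifman--Rochberg--Weiss commutator theorem) and verifies it by commuting $u$ past $\delta'_\theta$ and comparing the two selections $uB_\theta x$ and $B_\theta ux$ over $ux$ via Lemma \ref{mechanism}. The only difference is one of exposition: you make explicit the lift $L_u$ of $u$ to $\cl H$ and the verification that it preserves $\cl H$, which the paper only records informally (as the operator $U$) in the remark following its proof.
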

\begin{proof} It is enough to check that $(u,u)$ is bounded on $d_{\Omega_{\theta}}$. Since the diagram
\begin{equation}\label{basic}\begin{CD}
0@>>> X_\theta @>>>  d_{\Omega_{\theta}} X_{\theta}  @>>> X_\theta @>>>0\\
&&@VuVV @V{(u,u)}VV @VVuV\\
0@>>> X_\theta @>>>  d_{\Omega_{\theta}} X_{\theta}  @>>> X_\theta@>>> 0
\end{CD}
\end{equation}
is commutative, it remains to show that $(u,u)$ is bounded. Which is equivalent to saying that $[u, \Omega_\theta]$ is bounded. That this holds is the content of the commutator theorem of  Coifman-Rochberg-Weiss  \cite{rochberg-weiss}; which can be verified  by the next inequality:
$$
\|u \delta'_\theta B_\theta x - \delta_\theta' B_\theta u x\|_{X_\theta} = \|\delta'_\theta u  B_\theta x - \delta_\theta' B_\theta ux\|_{X_\theta} \leq 2
\|{\delta_\theta'}_{|\ker \delta_\theta}\| \|u\| \|B_\theta\| \|x\|_{X_\theta}.
$$
\end{proof}

In other words, when the extremal spaces of the scale admit the same complex structure then so do all intermediate spaces of the scale as well as the derived spaces. Actually, what occurs is that when $u$ is a complex structure on the scale then also Calderon's function space $\mathcal H$ admits a complex structure $U: \mathcal H \to \mathcal H$ given by $U(f)(\theta) = u(f(\theta))$. On the other hand, derived spaces are a particular instance of Rochberg's derived spaces \cite{rochberg}. Indeed, Rochberg's description of the derived space is:
$$d_{\Omega_\theta}X_\theta = \{\big(f'(\theta), f(\theta)\big)\,
:\, f\in  \mathcal{H} \}$$
endowed with the obvious quotient norm of $\mathcal H/(\ker \delta_\theta \cap \ker \delta'_\theta)$
(see \cite[Prop. 3.2]{cfg} for an explicit proof). In this line one can also consider Rochberg's iterated spaces $\mathcal Z^{n+1} = \mathcal H/ \bigcap_{i=0}^{i=n} \ker \delta_\theta^i
$ and show \cite{cck} that they form exact sequences
$$\begin{CD} 0@>>> \mathcal Z^{n}@>>> \mathcal Z^{n+m}@>>> \mathcal Z^{m}@>>> 0.\end{CD}$$
Since obviously $\mathcal Z^2 = d_{\Omega_\theta}X_\theta$ we get that these spaces provide iterated twisted sums of the previously obtained twisted sum spaces (again, see \cite{cck} for different detailed descriptions). One has:

\begin{prop}\label{comintn} Let $(X_{\theta})$ be an interpolation scale. If  $u: X_{\theta} \to X_{\theta}$ is a complex structure on the scale then, for all $n\geq 1$, the spaces $\mathcal Z^{n+1}$ admit a  complex structure. More precisely,
$$(u, \stackrel{n+1\; times} \cdots, u): \mathcal Z^{n+1} \To  \mathcal Z^{n+1}$$ is a  complex structure on $\mathcal Z^{n+1}$. \end{prop}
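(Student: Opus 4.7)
The plan is to iterate the strategy of Proposition \ref{comint} by lifting $u$ to an operator on Calder\'on's space $\mathcal H$ and then descending to the appropriate quotient. Define $U:\mathcal H \to \mathcal H$ by $(Uf)(z) = u(f(z))$ for $z\in\mathbb S$. Since $u$ is an operator on the scale, it is bounded on $X_0$, $X_1$ and $\Sigma$; combined with the fact that $u$ is $\C$-linear (and therefore preserves analyticity and commutes with complex differentiation of $\Sigma$-valued functions), one checks directly that $Uf \in \mathcal H$ whenever $f\in \mathcal H$, with $\|Uf\|_{\mathcal H}\leq \max(\|u\|_{X_0},\|u\|_{X_1})\,\|f\|_{\mathcal H}$, and obviously $U^2 = -\mathrm{id}_{\mathcal H}$.

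The key observation is that, because $u$ is a bounded linear operator and commutes with complex differentiation at $\theta$, for every $k\geq 0$ and every $f\in \mathcal H$ one has
$$(Uf)^{(k)}(\theta) = u\bigl(f^{(k)}(\theta)\bigr).$$
Consequently $U$ preserves each kernel $\ker \delta_\theta^{k}$, and therefore preserves the intersection $\bigcap_{i=0}^{n}\ker \delta_\theta^{i}$. Hence $U$ descends to a bounded operator $\tilde U$ on the quotient $\Z^{n+1}=\mathcal H/\bigcap_{i=0}^{n}\ker \delta_\theta^{i}$, which satisfies $\tilde U^2 = -\mathrm{id}_{\Z^{n+1}}$ and is therefore a complex structure on $\Z^{n+1}$.

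It only remains to identify $\tilde U$ with $(u,\ldots,u)$ ($n+1$ times) under Rochberg's realisation of $\Z^{n+1}$ as the space of tuples
$$\bigl(f(\theta), f'(\theta),\ldots, f^{(n)}(\theta)\bigr),\qquad f\in \mathcal H,$$
endowed with the quotient norm of $\mathcal H/\bigcap_{i=0}^{n}\ker \delta_\theta^{i}$. This identification, together with the displayed formula $(Uf)^{(k)}(\theta)=u(f^{(k)}(\theta))$, shows at once that $\tilde U$ sends the tuple $(f(\theta),\ldots,f^{(n)}(\theta))$ to $(u(f(\theta)),\ldots,u(f^{(n)}(\theta)))$, which is the claimed diagonal action.

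The main technical point to check is the well-definedness of $U$ on $\mathcal H$: that $Uf$ is $\|\cdot\|_\Sigma$-continuous and bounded on the strip and analytic in the interior, and that its boundary traces lie, respectively, in $X_0$ and $X_1$ with controlled norms. This is a routine verification once one uses boundedness of $u$ on each relevant space and the fact that $\C$-linear bounded operators preserve limits of difference quotients; the identification with the tuple picture then follows by quotienting, exactly as in the $n=1$ case treated in Proposition \ref{comint}.
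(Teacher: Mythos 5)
Your proof is correct and follows essentially the same route as the paper: lift $u$ to $U(f)(z)=u(f(z))$ on $\mathcal H$, use the commutation $(Uf)^{(k)}(\theta)=u\bigl(f^{(k)}(\theta)\bigr)$ to see that $U$ preserves $\bigcap_{i=0}^{n}\ker\delta_\theta^{i}$, descend to the quotient, and read off the diagonal action in Rochberg's representation. Your write-up is in fact somewhat more careful about the key identity than the paper's own inductive sketch.
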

\begin{proof} One only has to check that $U$ sends $\bigcap_{i=0}^{i=n} \ker \delta_\theta^i$ into itself: If $\delta_\theta^n(f)=0$ for $n=0,1,...,n$ then $U(f)(\theta) = u(f(\theta))= u(0)=0$, which is the case $n=0$. For $n=1$,
$\delta_\theta^1(Uf)= u(f)\delta_\theta^1(f) =0$. And then proceed by induction.\medskip

The induced complex structure can be made explicit by making explicit the spaces $\mathcal Z^{n+1}$: In Rochberg's representation they become $\mathcal Z^{n+1} = \{\big(t_n\delta_\theta^n(f), \dots, t_1 \delta_\theta^1(f), t_0 \delta_\theta (f)\big)\,
:\, f\in  \mathcal{H} \}$ for adequately chosen scalars $t_n$ (so that $t_n\delta_\theta^n(f)$ become the Taylor coefficients of $f$). The second assertion immediately follows from that.\end{proof}

\begin{cor} Every twisted sum space $d_\Omega L_2$ induced by a centralizer obtained from a scale of rearrangement invariant K\"othe function spaces admits a complex structure (compatible with the natural complex structure on $L_2$).\end{cor}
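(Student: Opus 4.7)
The plan is to produce a single complex structure $\omega$ that simultaneously acts isometrically on every space of a rearrangement invariant scale, and then to invoke Proposition \ref{comint}.

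First, fix a partition $(\Sigma_n)_n$ of the underlying measure space $(\Sigma,\mu)$ into positive measure sets together with measure preserving bijections $a_n:\Sigma_n\to\Sigma_{\sigma(n)}$, where $\sigma=(2,1)(4,3)\cdots$ is the permutation recalled in Subsection 2.1. Define $\omega$ by formula (\ref{rispace}). Because $\omega(f)$ only redistributes values of $f$ by measure preserving maps and introduces signs, the modulus $|\omega(f)|$ is equimeasurable with $|f|$. Consequently, on every rearrangement invariant K\"othe function space, $\omega$ is a linear isometry, and a direct check gives $\omega^2=-\mathrm{id}$.

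Next, let $(X_\theta)_{\theta\in[0,1]}$ be an interpolation scale of rearrangement invariant K\"othe function spaces, with $X_\theta=L_2$ at the interpolation point $\theta$ that produces the given centralizer $\Omega=\Omega_\theta$. Since $\omega$ is an isometry on both extremes $X_0$ and $X_1$, it is an isometry on their intersection $\Delta$ and on their sum $\Sigma$; in particular it is an operator on the scale in the sense of Subsection \ref{complex}, and therefore a complex structure on the scale. Proposition \ref{comint} immediately yields that
\[
(\omega,\omega): d_{\Omega_\theta} L_2 \longrightarrow d_{\Omega_\theta} L_2
\]
is a complex structure. By construction its restriction to the subspace $L_2$ coincides with $\omega$, which is the natural complex structure on $L_2$ given by (\ref{rispace}), establishing the claimed compatibility.

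The result is really a clean consequence of Proposition \ref{comint}; the one conceptual point is the observation that rearrangement invariance of the norm forces a single measure preserving involution to act isometrically on every space of the scale simultaneously, which is what allows $\omega$ to qualify as a complex structure on the scale. I do not anticipate any substantive obstacle beyond that observation.
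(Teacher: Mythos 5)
Your proof is correct and follows exactly the paper's (very terse) argument: the paper's proof simply notes that formula (\ref{rispace}) defines a complex structure on any scale of rearrangement invariant K\"othe spaces and then relies on Proposition \ref{comint}. You have merely filled in the details (equimeasurability of $|\omega(f)|$ with $|f|$ giving isometry on each space of the scale, hence an operator on the scale), which is the intended justification.
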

\begin{proof} The formula (\ref{rispace}) yields a complex structure on a scale of rearrangement invariant K\"othe spaces.\end{proof}

Propositions \ref{comint} and \ref{comintn} apply to more general realizations of Hilbert spaces. For instance, the couple $(B(H), S_1)$ yields the interpolation scale of Schatten classes $S_p$, which admits the following complex structure: if  $T = \sum  a_n(T) f_n \otimes e_n$ then
 $$\overline \omega (T) = \sum \omega( a_n(T)) f_n \otimes e_n.$$

Therefore, the Schatten-Kalton-Peck space $d_{\mathcal K} S_2$ induced by that scale admits he complex structure $(\overline\omega, \overline\omega)$. More general constructions in the domain of noncommutative $L_p$ spaces are possible (see \cite{ccgs,sua,cabetrue}); the corresponding noncommutative results will appear elsewhere.

\section{Complex structures on twisted Hilbert spaces}

We particularize now the situation described in the preceding section to the case in which one is working with scales of K\"othe function (resp. sequence) spaces in their $L_\infty$-module structure (resp. $\ell_\infty$-module structure) and $X_\theta$ is a Hilbert space; which is a rather standard situation since under  mild conditions the complex interpolation space $(X, \overline{X}^*)_{1/2}$  is a Hilbert space (see \cite{cfg}). One has:

\begin{teor}\label{extension} $\;$
\begin{enumerate}
\item For every non-trivial $L_\infty$-centralizer $\Omega$ on $L_2$  there is a complex structure on $L_2$ that can not be extended to an operator on  $d_{\Omega} L_2$.
 \item For every non-trivial $\ell_\infty$-centralizer $\Omega$ on $\ell_2$  there is a complex structure on $\ell_2$ that can not be extended to an operator on  $d_{\Omega} \ell_2$.
     \end{enumerate}
\end{teor}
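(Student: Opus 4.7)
My plan is to use Lemma~\ref{crit} to rephrase the question: a complex structure $u$ on $\ell_2$ (or $L_2$) fails to extend to $d_\Omega \ell_2$ exactly when, for every operator $\sigma$ on $\ell_2$, the map $u\Omega-\Omega\sigma$ is not a sum of a homogeneous bounded map and a linear map. The strategy is to proceed by contradiction: I assume every complex structure on $\ell_2$ extends, and deduce that $\Omega$ must be trivial, contradicting the hypothesis. The cases of $L_2$ and $\ell_2$ should run in parallel, with the only change being the canonical scale complex structure ($\omega$ for $\ell_2$, or the analogue from formula \eqref{rispace} for $L_2$).

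The starting point is Proposition~\ref{comint}: for the natural complex structure $u_0$ coming from the scale, $(u_0,u_0)$ is compatible with $\Omega$, so the commutator $u_0\Omega-\Omega u_0$ is bounded. I would then probe with conjugates: for any bounded $W$ with $\|W\|<1$, the operator $u_W:=(I+W)u_0(I+W)^{-1}$ is again a complex structure, and so under the extendability assumption there exists $\sigma_W$ with
\[
(I+W)u_0(I+W)^{-1}\Omega - \Omega\sigma_W = B_W+L_W
\]
with $B_W$ homogeneous bounded and $L_W$ linear. This gives a rich family of compatibility relations indexed by $W\in B(\ell_2)$, covering in particular operators not ``on the scale'' (not bounded on the extremes $\Delta$, $\Sigma$), for which Proposition~\ref{comint} does not supply the extension for free.

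The core step is then to combine these relations with the centralizer identities $[M_g,\Omega]$ bounded (for $g\in\ell_\infty$), together with the known $u_0\Omega-\Omega u_0$ bounded, to extract a bounded operator $B$ such that $\Omega-B$ is linear. Rearranging the displayed relation by multiplying on the left by $(I+W)^{-1}$ and expanding, the family $\{\sigma_W\}$ should yield, after specializing $W$ to a suitably chosen probe class (for instance conjugations by $I+\epsilon M_g$ or combinations $[M_g,u_0]$ that interact transparently with the centralizer), an inner implementation of the derivation $g\mapsto[M_g,\Omega]$, which in the theory of centralizers on K\"othe function spaces is equivalent to triviality of $\Omega$. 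This would contradict the non-triviality hypothesis.

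The main obstacle is the last step: turning a family of trivialities of the form $V\Omega - \Omega\sigma'_V$ (parametrized by $V$ in a rich class) into the single statement that $\Omega$ is trivial. This is where one must exploit the specific structure of centralizers on $\ell_2$ or $L_2$ (namely, that the centralizer's derivation into $L_0$ is controlled modulo $X_\theta$), in the spirit of Kalton's characterization of trivial centralizers. A secondary difficulty is uniform control on the bounded parts $B_W$ and linear parts $L_W$ across the family; this should be handled either by passing to a countable dense family of $W$'s and a Banach--Steinhaus/diagonal argument, or by working formally with a generic $W$ and noting that the bounded and linear parts transform predictably under perturbations of the conjugating operator.
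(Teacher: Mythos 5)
Your reduction via Lemma~\ref{crit} is correct, and the contrapositive framing (``if every complex structure on $L_2$ extends, then $\Omega$ is trivial'') is a legitimate way to state what must be proved. But the proposal has a genuine gap at exactly the point you flag as ``the core step'': you never show that the family of relations $u_W\Omega-\Omega\sigma_W\equiv 0$, for $W$ ranging over your probe class, forces $\Omega$ to be trivial. This implication is the entire content of the theorem, and it is far from formal: a non-trivial centralizer such as $\mathscr K$ is compatible with a very rich family of operators (all operators on the scale, by the commutator theorem invoked in Proposition~\ref{comint}), so richness of the compatible family alone cannot yield triviality. Nothing in the proposal explains how the conjugates $(I+W)u_0(I+W)^{-1}$, or the perturbations $I+\epsilon M_g$, produce an ``inner implementation of the derivation'' $g\mapsto[M_g,\Omega]$; the phrase stands in for an argument that is not given and whose truth is unclear. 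A secondary omission: the theorem asks for a complex structure on the \emph{subspace} copy of $L_2$ that does not extend, whereas the natural obstruction one constructs lives on the \emph{quotient} side (non-liftability), and passing between the two requires a duality step you do not mention.

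The paper's route is quite different and entirely constructive. It introduces the averaged deviation $\nabla_{[b]}\Omega$ and proves (Lemma~\ref{nabla}) that if $H$ contains two normalized sequences $a$ and $b$, spanning a direct sum and each equivalent to the $\ell_2$ basis, with $\sup_\lambda \nabla_{[\lambda b]}\Omega/(\|\lambda\|_2+\nabla_{[\lambda a]}\Omega)=+\infty$, then the complex structure swapping $a_n\leftrightarrow b_n$ (up to sign) cannot be lifted: compatibility would transport the small $\nabla$-growth along $a$ to the large $\nabla$-growth along $b$. The real work is then to find such a pair of sequences for an arbitrary non-trivial centralizer: for $L_2$ one uses Lemma~\ref{disj} to locate a disjointly supported sequence on which $\Omega^*$ is non-trivial (hence $\nabla$ is unbounded, by Lemma~\ref{bounded}) and the theorem of Cabello that no centralizer on $L_2$ is singular to find a Rademacher-type sequence on which it is trivial; for $\ell_2$ one replaces the latter input by the non-existence of supersingular quasi-linear maps on $\ell_2$. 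Finally the construction, carried out for $\Omega^*$, is dualized to produce the non-extendable complex structure on the subspace. None of these ingredients (the invariant $\nabla$, the singularity/supersingularity inputs, the duality) appears in your proposal, so as written it does not constitute a proof.
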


Somehow, (2) is more complicated than (1), although both proofs follow from a criteria that will be established in Lemma \ref{nabla}. It will be helpful to recall that an operator between Banach spaces is said to be \emph{strictly singular}
if no restriction to an infinite dimensional closed subspace is an isomorphism. In this line, a quasi-linear map (in particular, a centralizer) is said to be \emph{singular} if its
restriction to every infinite dimensional closed subspace is never trivial. It is well known \cite{castmorestrict} that a  quasi-linear map is singular if and only if the associated exact sequence has strictly singular quotient map. Singular quasi-linear maps have been studied in
e.g.  \cite{ccs}, \cite{cfg}. The Kalton-Peck centralizer on $\ell_2$ is singular
\cite{kaltpeck}, while the Kalton-Peck centralizer on $L_2$ is not  \cite{cabe} (although it is "lattice" singular, according to the terminology of \cite{cfg}).

\adef Let $Z,Y$ be  Banach spaces and $\Omega: Z \lop Y$ be a quasi-linear map.
Given  a finite sequence $b=(b_k)_{k=1}^n$ of  vectors  in   $Z$ we will call $\nabla_{[b]} \Omega$  the number
$$ \nabla_{[b]} \Omega   ={\rm Ave}_{\pm} \left \|  \Omega  \left ( \sn  \pm b_k\right ) - \sn \pm \Omega(b_k)\right \|_Y,
$$
where the average is  taken over all  the signs $\pm 1$.
\zdef

Note that the quasi-linear condition guarantees that although $\Omega$ may take values in some $Y_0 \supset Y$, the differences appearing in the definition of $\nabla_{[b]} \Omega$ do belong to $Y$.
The triangle inequality holds for $\nabla_{[b]} \Omega$: if $\Omega$ and $\Psi$ are quasi-linear, then
$\nabla_{[b]} (\Omega + \Psi) \leq \nabla_{[b]} \Omega + \nabla_{[b]} \Psi.$ If $\lambda=(\lambda_k)_k$ is a finite sequence of scalars and  $x=(x_k)_k$  a sequence of vectors of $X$, we write $\lambda x$ to denote the finite sequence obtained by  the  non-zero vectors of $(\lambda_1x_1, \lambda_2x_2, \ldots )$. Recall that $\ell(Z,Y)$ is the vector space of linear (not necessarily continuous) maps $Z\to Y$, and that the distance between two homogeneous maps $T,S$ is the usual operator norm of the difference  $\|T-S\|$.

\begin{lemma} \label{trivial}Let $H$ be a Hilbert space and $\Omega: H \lop H$  be a quasi-linear map.
Assume that the restriction of $\Omega$ to a closed subspace $W\subset H$ is trivial. Then there exists $C \geq 1$ such that
for every  sequence $x=(x_k)_k$  of  normalized vectors in $W$ and every $\lambda=(\lambda_k)_k \in c_{00}$,
$$   \nabla_{[\lambda x]} \Omega\leq C  \|\lambda\|_2.
$$
\end{lemma}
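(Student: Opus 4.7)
The plan is to exploit the explicit decomposition of a trivial quasi-linear map and then use the Hilbert-space parallelogram identity to handle the resulting Rademacher averages.

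First I would invoke the triviality of $\Omega|_W$: by the discussion just before the lemma, there exist a homogeneous bounded map $B \colon W \to H$ and a linear map $L \colon W \to H$ (not necessarily bounded) with $\Omega|_W = B + L$. Since all the vectors $\lambda_k x_k$ and every sign combination $\sum_k \pm \lambda_k x_k$ stay in $W$, I can substitute this decomposition into the expression whose average defines $\nabla_{[\lambda x]} \Omega$. The linear part contributes
$$L\Bigl(\sum_k \pm \lambda_k x_k\Bigr) - \sum_k \pm L(\lambda_k x_k) = 0,$$
so only $B$ survives, giving
$$\Omega\Bigl(\sum_k \pm \lambda_k x_k\Bigr) - \sum_k \pm \Omega(\lambda_k x_k) \;=\; B\Bigl(\sum_k \pm \lambda_k x_k\Bigr) - \sum_k \pm B(\lambda_k x_k).$$

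Next I would estimate the two remaining terms using that $H$ is a Hilbert space. For any finite collection $(v_k)$ in $H$, orthogonality of the Rademacher characters yields the identity
$$\mathrm{Ave}_{\pm}\,\Bigl\|\sum_k \pm v_k\Bigr\|^2 \;=\; \sum_k \|v_k\|^2,$$
so by Cauchy--Schwarz $\mathrm{Ave}_{\pm}\,\|\sum_k \pm v_k\| \leq \bigl(\sum_k \|v_k\|^2\bigr)^{1/2}$. Applied to $v_k = \lambda_k x_k$ with $\|x_k\| = 1$ this gives
$$\mathrm{Ave}_{\pm}\,\Bigl\|\sum_k \pm \lambda_k x_k\Bigr\| \;\leq\; \|\lambda\|_2,$$
whence $\mathrm{Ave}_{\pm}\|B(\sum_k \pm \lambda_k x_k)\| \leq \|B\|\cdot\|\lambda\|_2$ by boundedness and homogeneity of $B$. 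Applied to $v_k = \pm B(\lambda_k x_k)$ the same inequality, together with $\|B(\lambda_k x_k)\| \leq \|B\|\,|\lambda_k|$, gives $\mathrm{Ave}_{\pm}\|\sum_k \pm B(\lambda_k x_k)\| \leq \|B\|\cdot\|\lambda\|_2$.

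Adding the two estimates via the triangle inequality yields $\nabla_{[\lambda x]}\Omega \leq 2\|B\|\cdot\|\lambda\|_2$, and the constant $C := \max(1,2\|B\|)$ works. There is essentially no obstacle here; the only mild point to keep in mind is that the convention $\lambda x = (\lambda_k x_k)$ drops the zero terms, but this is harmless because vanishing terms contribute $0$ on both sides of the defining expression of $\nabla$. The crux of the argument is the cancellation of the (possibly unbounded) linear part $L$, which reduces everything to the well-behaved bounded map $B$ and permits the Hilbert-space $L^2$ estimate.
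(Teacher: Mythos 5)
Your proof is correct and follows essentially the same route as the paper's: decompose $\Omega|_W = B + L$, observe that the linear part cancels in the defining expression of $\nabla_{[\lambda x]}\Omega$, and bound both remaining Rademacher averages by $\|B\|\,\|\lambda\|_2$ (the paper phrases this via the type-2 property of $H$, which is exactly your parallelogram-plus-Cauchy--Schwarz estimate), arriving at the same constant $2\|B\|$. No issues.
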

\begin{proof} Suppose  that  $\Omega|_W$ is trivial for a closed subspace $W$ of $H$. Then we  can write $\Omega|_W=B+L$  with $B$ bounded homogeneous and $L$ linear. Let $x=(x_k)_{k=1}^n$ be a sequence of normalized vectors in $W$. Since $H$ has type 2 one gets:
\begin{eqnarray*}
 \nabla_{[\lambda x]} \Omega   &=& {\rm Ave}_{\pm} \left \|  B \left ( \sum_{k=1}^{\infty} \epsilon_k \lambda_k   x_k\right ) - \sum_{k=1}^{\infty} \epsilon_k B(\lambda_k  x_k)\right \| \\
&\leq& \|B\|{\rm Ave}_{\pm} \left \| \sum_{k=1}^{\infty}  \epsilon_k \lambda_k  x_k \right \| +Ave_{\pm}  \left \|  \sum_{k=1}^{\infty}  \epsilon_kB \lambda_k  x_k \right \| \\
&\leq& \|B\| \left(  \sum_{k=1}^{\infty}  \ \lambda_k^2  \|x_k\| ^2\right )^{1/2} +  \left(  \sum_{k=1}^{\infty} \|B \lambda_k  x_k\| ^2\right )^{1/2}\\
&\leq& 2\|B\|\left(  \sum_{k=1}^{\infty}  |\lambda_k  |^2\right )^{1/2}.
\end{eqnarray*}
\end{proof}

Observe that if  a sequence $x=(x_i)_i$  of $H$ is such that for a fixed constant $C$ and every $(\lambda_i)_i \in c_{00}$
$$\|\Omega(\sum_{i=1}^\infty \lambda_i x_i) - \sum_{i=1}^\infty \lambda_i \Omega(x_i)\| \leq C \|\sum_{i=1}^\infty \lambda_ix_i\|,$$
then the restriction of $\Omega$ to $[x_i]_i$ is trivial; actually the linear map $L( \sum_{i=1}^n \lambda_ix_i)=  \sum_{i=1}^n \lambda_i \Omega(x_i)$ is at distance $C$ from $\Omega$ on $[x_i]_i$. In other words, the condition of Lemma \ref{trivial} ``without averaging " is sufficient for triviality on $W=[x_i]_i$. When $\Omega$ is a centralizer, the original condition is necessary and sufficient:

\begin{lemma}\label{bounded} Let $\Omega$ be a centralizer  on $L_2$ (resp. $\ell_2$) and let $u=(u_n)_n$ be a disjointly supported  normalized sequence of vectors in $L_2$ (resp. $\ell_2$). The restriction of $\Omega$ to the closed linear span of the $u_n$'s   is trivial if and only if there is a constant $C>0$ such that
$$   \nabla_{[\lambda u]} \Omega \leq C \| \lambda \|_2$$
for every $\lambda=(\lambda_k)_k\in c_{00}$.
\end{lemma}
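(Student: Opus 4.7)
The forward implication is immediate from Lemma~\ref{trivial} applied to $W=[u_n]$. For the converse, the plan is to exhibit a linear map $\tilde L$ on the algebraic span $W_0 := \spa\{u_n\}$ such that $\Omega - \tilde L$ maps $W_0$ boundedly into $X$; the resulting bounded linear selection of the restricted exact sequence
\[ 0 \to X \to q^{-1}([u_n]) \to [u_n] \to 0 \]
on the dense set $W_0$ then extends by continuity to all of $[u_n]$, yielding triviality.

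The natural candidate, dictated by the centralizer structure, is
\[ \tilde L\Bigl(\sum_k \lambda_k u_k\Bigr) := \sum_k \lambda_k\,\chi_{\supp u_k}\,\Omega(u_k), \]
linear in $(\lambda_k)$ and valued in $L_0$. To estimate $\Omega(v) - \tilde L(v)$ at $v = \sum_k \lambda_k u_k$, I would introduce the sign-modulator $f_\epsilon := \sum_k \epsilon_k \chi_{\supp u_k}$ for $\epsilon \in \{\pm1\}^{\supp \lambda}$. Disjointness yields $\|f_\epsilon\|_\infty = 1$, $v_\epsilon := f_\epsilon v = \sum_k \epsilon_k \lambda_k u_k$, and crucially $f_\epsilon v_\epsilon = v$, so the centralizer inequality applied to the pair $(f_\epsilon, v_\epsilon)$ produces the identity
\[ \Omega(v) = f_\epsilon\,\Omega(v_\epsilon) + r_\epsilon, \qquad \|r_\epsilon\|_X \leq C\|v\|_X, \]
uniformly in $\epsilon$.

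Now splitting $\Omega(v_\epsilon) = \sum_k \epsilon_k \lambda_k\,\Omega(u_k) + D(v_\epsilon)$, where $D(v_\epsilon)$ is precisely the $\epsilon$-summand in $\nabla_{[\lambda u]}\Omega$, and using that $\mathrm{Ave}_\epsilon[\epsilon_j \epsilon_k] = \delta_{jk}$, a direct expansion shows that $\mathrm{Ave}_\epsilon\bigl[f_\epsilon \sum_k \epsilon_k \lambda_k \Omega(u_k)\bigr] = \tilde L(v)$. Averaging the displayed identity for $\Omega(v)$ over all sign choices and using that multiplication by $f_\epsilon$ is a contraction on the K\"othe space $X$, one arrives at
\[ \|\Omega(v) - \tilde L(v)\|_X \leq \mathrm{Ave}_\epsilon \|D(v_\epsilon)\|_X + C\|v\|_X = \nabla_{[\lambda u]}\Omega + C\|v\|_X \leq 2C\|v\|_X, \]
by the standing hypothesis, which is exactly what was needed.

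The heart of the argument is the observation that Rademacher averaging collapses the double sum $\sum_{j,k} \epsilon_j \epsilon_k \lambda_k \chi_{\supp u_j} \Omega(u_k)$ onto its diagonal, which equals $\tilde L(v)$; here the centralizer property and disjointness of the $u_k$'s are both essential. The extension of the bounded linear selection from $W_0$ to the closed span $[u_n]$ is then routine, relying only on the completeness of $d_\Omega X$.
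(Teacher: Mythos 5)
Your proof is correct and follows essentially the same route as the paper's: both exploit the centralizer estimate for the unimodular multiplier $f_\epsilon$ (so that $\sum_k \epsilon_k\lambda_k u_k = f_\epsilon v$ with $f_\epsilon v_\epsilon = v$) and then average over the signs to compare $\Omega(v)$ with a linear expression in the $\Omega(u_k)$. Your use of the truncated linear map $\sum_k\lambda_k\chi_{\supp u_k}\Omega(u_k)$, which makes the Rademacher average collapse exactly onto the diagonal, is a slightly more careful execution than the paper's choice of $\sum_k\lambda_k\Omega(u_k)$, but the underlying idea is the same.
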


\begin{proof} If  the restriction of $\Omega$ to $[u_n]$   is trivial the result follows from Lemma \ref{trivial}.  Conversely, set  $v_i=\lambda_i u_i$, $v=\sum_{i=1}^n v_i$, and $\sum_{i=1}^n \epsilon_i v_i = \epsilon v$ for some function $\epsilon$ taking values $\pm 1$, thus the  centralizer $\Omega$ verifies

\begin{eqnarray*} \|\Omega(\epsilon \sum_{i=1}^n  v_i) - \epsilon \sum_{i=1}^n \ \Omega(v_i)\|
 &\leq & \|\Omega(\epsilon v) - \epsilon \Omega (v)\| + \| \epsilon \Omega( v) -  \epsilon \sum_{i=1}^n \Omega(v_i)\|\
  \end{eqnarray*}

 For some constant $c>0$,

\begin{eqnarray*} \|\Omega(\epsilon \sum_{i=1}^n v_i) - \epsilon \sum_{i=1}^n \Omega(v_i)\|
&\leq& c\| v\| + \|\Omega (\sum_{i=1}^n \lambda_i u_i) - \sum_{i=1}^n \lambda_i \Omega(v_i)\|
  \end{eqnarray*}

Being the same true for all signs and all $\lambda_i$'s, one gets by taking the average

$$\|\Omega(\sum_{i=1}^n \lambda_i u_i) - \sum_{i=1}^n \lambda_i \Omega(u_i)\| \leq c \| \lambda \|_2  +   \nabla_{[\lambda u]} \Omega,$$
which yields the result.
\end{proof}

Let us recall from Lemma \ref{crit} that we say that an operator $u$ on a Hilbert space $H_2$ {\em can be lifted} to an operator on $H_1 \oplus_\Omega H_2$ if there exists an operator $\beta$ on $H_1 \oplus_\Omega H_2$ so that  $\beta_{|H_1}: H_1\to H_1$. The next lemma yields a method to construct a  complex structure on $H$ which cannot be lifted to operators on   $H\oplus_{\Omega} H$, provided there exist two separated sequences in $H$ with sufficiently different  $\nabla \Omega$. Precisely,

\begin{lemma}\label{nabla} Let $H$ be Hilbert space and let $\Omega: H \lop H$ be  a quasi-linear map. Suppose that $H$ contains two  normalized sequences  of vectors $a=(a_n)_n$,  $b=(b_n)_n$, such that:
\begin{itemize}
\item[(i)] $(a_n)_n$ and $(b_n)_n$ are equivalent to the unit vector basis of $\ell_2$,
\item[(ii)]    $[a]\oplus [b]$ forms a direct sum in $H$,
\item[(iii)]    $$\sup_{\lambda \in c_{00}} \frac{\nabla_{[\lambda b]} \Omega }{\|\lambda\|_2+\nabla_{[\lambda a]} \Omega } =+\infty$$
\end{itemize} Then $H$ admits a complex structure $u$ that cannot be lifted to any operator on $H \oplus_{\Omega} H$. 
\end{lemma}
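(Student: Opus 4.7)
The strategy is to construct $u$ so that it essentially swaps the sequences $(a_n)$ and $(b_n)$ (up to signs), and then use assumption (iii) to derive a contradiction from the compatibility condition of Lemma \ref{crit}.

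First, I would set $Y = [a_n] + [b_n]$, which by (i) and (ii) is a closed subspace of $H$ with $[a_n]$ and $[b_n]$ both isomorphic to $\ell_2$ and admitting bounded projections onto each factor (since the direct sum is topological in the Hilbert space $H$). I would then define $u_0 \colon Y \to Y$ on the generators by $u_0(a_n) = b_n$ and $u_0(b_n) = -a_n$, extending by linearity; since $(a_n)$ and $(b_n)$ are both equivalent to the unit vector basis of $\ell_2$, this extension is bounded, and manifestly $u_0^2 = -\operatorname{id}_Y$. Next I extend $u_0$ to a complex structure $u$ on all of $H$ by choosing any complex structure on a topological complement of $Y$ in $H$ (which exists on any infinite-dimensional Hilbert space; if a parity obstruction appears in a finite-dimensional complement, pass to suitable infinite subsequences of $(a_n), (b_n)$ for which assumption (iii) is preserved).

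Next I would suppose, toward a contradiction, that $u$ lifts to an operator on $H \oplus_\Omega H$. By Lemma \ref{crit}, this yields some operator $\tau$ on $H$ such that $(\tau, u)$ is compatible with $\Omega$; equivalently, $\tau\Omega - \Omega u = B + L$ for some homogeneous bounded $B$ and some linear $L$. The aim is to estimate $\nabla_{[\lambda a]}$ of both sides. Since $\tau$ is linear, it commutes with taking finite signed sums, and $\nabla_{[\lambda a]}(\tau\Omega) \leq \|\tau\|\, \nabla_{[\lambda a]}\Omega$. Since $u(a_n) = b_n$ exactly, one has $\nabla_{[\lambda a]}(\Omega u) = \nabla_{[\lambda b]}\Omega$. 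The linear map $L$ gives $\nabla_{[\lambda a]}(L) = 0$, and for $B$ one uses boundedness and the fact that $(a_n)$ is equivalent to the $\ell_2$-basis to get
\[
\nabla_{[\lambda a]}(B) \leq 2\|B\|\cdot \mathrm{Ave}_\pm \bigl\| \textstyle\sum \pm \lambda_k a_k\bigr\| \leq C\|B\|\,\|\lambda\|_2.
\]
By the triangle inequality for $\nabla$,
\[
\nabla_{[\lambda b]}\Omega \;=\; \nabla_{[\lambda a]}(\Omega u) \;\leq\; \nabla_{[\lambda a]}(\tau\Omega) + \nabla_{[\lambda a]}(B+L) \;\leq\; \|\tau\|\, \nabla_{[\lambda a]}\Omega + C\|B\|\,\|\lambda\|_2,
\]
which bounds the ratio in (iii) uniformly in $\lambda \in c_{00}$, contradicting the assumed unboundedness.

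The routine technical point is the construction of $u_0$ and its extension, for which (i), (ii) and the Hilbert structure do the work. The genuine content lies in the computation of $\nabla_{[\lambda a]}$ applied to the compatibility identity $\tau\Omega - \Omega u = B + L$: one has to exploit the linearity of $\tau$ and $L$ (which preserve signed sums, so cause no deviation), the type $2$ behaviour of $H$ to control $\nabla$ of the bounded part $B$, and the precise swap $u(a_n)=b_n$ to identify $\nabla_{[\lambda a]}(\Omega u)$ with $\nabla_{[\lambda b]}\Omega$. The main obstacle, therefore, is setting up the $\nabla$-calculus so that these four contributions combine cleanly into the ratio appearing in (iii); this parallels, and generalises, the simpler calculation done in Lemma \ref{trivial}.
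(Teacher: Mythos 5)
Your proposal is correct and follows essentially the same route as the paper: the same swap construction $u(a_n)=b_n$, $u(b_n)=-a_n$ completed by an arbitrary complex structure on a complement, followed by applying the $\nabla$-calculus to the compatibility identity $\tau\Omega-\Omega u=B+L$ to get $\nabla_{[\lambda b]}\Omega\leq\|\tau\|\,\nabla_{[\lambda a]}\Omega+C\|B\|\,\|\lambda\|_2$, contradicting (iii). The only cosmetic difference is that the paper rewrites the identity as $\tau\Omega u^{-1}=\Omega+Bu^{-1}+Lu^{-1}$ and evaluates $\nabla_{[\lambda b]}$, whereas you evaluate $\nabla_{[\lambda a]}$ directly; these are equivalent.
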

\begin{proof} There is no loss of generality in assuming that $\Omega$ takes values in $H$ and $[a] \oplus [b]$ spans an infinite codimensional subspace of $H$. Thus write $H=[a] \oplus [b]\oplus Y$ and let $v:Y \to Y$ be any choice of complex structure on $Y$.  We define  a complex structure $u$ on $H$ by setting $u( a_n) = b_n$, $u(b_n) =-a_n$,  and $u_{|Y}=v$.\medskip

Note that if  $\tau: H\to H$ is any bounded linear operator then for every $\lambda\in c_{00}$ one has
\begin{equation}\label{six}\nabla_{[\lambda b]}\left ( \tau \Omega u^{-1}\right) \leq \|\tau\|  \nabla_{[\lambda b]}\left (\Omega u^{-1}\right) = \|\tau \| \nabla_{[\lambda a]}\Omega.\end{equation}

Now, assume that  $u$ lifts to a bounded operator $\beta$ on $H \oplus_{\Omega} H$, which means that there exists a bounded operator $\tau$ on $H$ such that $(\tau, u)$ is compatible with $\Omega$. Thus, by Lemma \ref{crit},  $\tau\Omega- \Omega u = B+ L$, with $B$ homogeneous bounded and $L$ linear. Therefore,
$\tau\Omega u^{-1} = \Omega + Bu^{-1} + Lu^{-1}$. By the triangular inequality it follows
\begin{equation}\label{seven} |\nabla_{[\lambda b]}\left ( \tau \Omega u^{-1}\right)
- \nabla_{[\lambda b]} \Omega|
\leq \nabla_{[\lambda b]} (\tau \Omega u^{-1}-\Omega)  = \nabla_{[\lambda b]}  B \leq 2\|B\|\|\lambda\|_2,\end{equation}
for all $\lambda \in c_{00}$. Thus, the combination of (\ref{seven}) and (\ref{six}) yields,

\begin{equation*} \nabla_{[\lambda b]}  \Omega  \leq \|\tau\|\nabla_{[\lambda a]}  \Omega   + 2\|B\|\|\lambda\|_2, \end{equation*}
which contradicts (iii).\end{proof}

We shall need the next perturbation lemma, based on essentially well-known ideas in the theory of twisted sums.

\begin{lemma}\label{blocks} Let $Y$, $X$, $Z$ be Banach spaces. Let $\Omega: Z \to Y$ be a quasi-linear, and let $N: X \to Z$ be a nuclear operator of the form $N=\sum_n s_n x_n^*(\cdot) z_n$, $x_n^*$ normalized in $X^*$, $z_n$ normalized in $Z$, $s=(s_n)_n \in \ell_1$. Then $\Omega N: X \to Y$ is $C$-trivial for $C=Z(\Omega) \|s\|_1$. In the case when $X=Y$  and $Z$ is $B$-convex then there is a constant $c_Z$, depending only on $Z$, such that also $N\Omega: Z \to Y$ is $c_Z Z(\Omega) \|s\|_1$-trivial.
\end{lemma}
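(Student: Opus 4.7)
For the first statement, the plan is to lift the nuclear operator $N$ through the quotient $q\colon Y\oplus_\Omega Z\to Z$ by means of the bounded quasi-linear section $B\colon Z\to Y\oplus_\Omega Z$, $B(z)=(\Omega(z),z)$, which satisfies $\|B(z)\|_\Omega=\|z\|$. I would set
\[\tilde N(x):=\sum_n s_n\,x_n^*(x)\,B(z_n).\]
Each term is a fixed vector of $Y\oplus_\Omega Z$ of quasi-norm at most $1$ scaled by a linear functional of $x$, so $\tilde N$ is linear in $x$, and the partial sums are Cauchy because $\sum_n |s_n x_n^*(x)|\,\|B(z_n)\|_\Omega\le\|s\|_1\|x\|$; the resulting $\tilde N(x)\in Y\oplus_\Omega Z$ has quasi-norm at most $C\|s\|_1\|x\|$, where $C$ is the quasi-triangle constant of $Y\oplus_\Omega Z$, itself controlled by $Z(\Omega)$. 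Writing $\tilde N(x)=(\tilde L(x),Nx)$ with linear $\tilde L(x)=\sum_n s_n x_n^*(x)\,\Omega(z_n)$, the identity $\|\tilde N(x)\|_\Omega=\|\tilde L(x)-\Omega(Nx)\|+\|Nx\|$ gives $\|\Omega N-\tilde L\|\le Z(\Omega)\|s\|_1$, so $\Omega N$ is $Z(\Omega)\|s\|_1$-trivial. Equivalently, $\tilde N$ realises a bounded linear factorisation of $N$ through $Y\oplus_\Omega Z$, which forces the pullback sequence with associated quasi-linear map $\Omega N$ to split.

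For the second statement, the plan is to invoke the Kalton--Peck theorem that every quasi-linear functional on a $B$-convex Banach space $Z$ is trivial, with triviality constant at most $c_Z$ times its quasi-linearity constant, where $c_Z$ depends only on $Z$. For each $n$ the functional $\phi_n:=x_n^*\circ\Omega\colon Z\to\R$ is quasi-linear with $Z(\phi_n)\le Z(\Omega)$ (since $x_n^*\in Y^*$ is normalised and $X=Y$), so there exists a linear $\ell_n\colon Z\to\R$ with $|\phi_n(z)-\ell_n(z)|\le c_Z Z(\Omega)\|z\|$. Setting $L(z):=\sum_n s_n\,\ell_n(z)\,z_n$ (linear in $z$, convergent by $\ell_1$-summability of the $s_n$ together with the control of the scalar factors by $\|\Omega z\|+c_Z Z(\Omega)\|z\|$), one computes
\[N\Omega(z)-L(z)=\sum_n s_n\bigl(\phi_n(z)-\ell_n(z)\bigr)z_n,\]
whose norm is bounded by $\sum_n|s_n|\,c_Z Z(\Omega)\|z\|\cdot\|z_n\|\le c_Z Z(\Omega)\|s\|_1\|z\|$, as required.

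The main obstacle lies in the first part: one needs convergence and quantitative control of $\tilde N$ inside the quasi-Banach space $Y\oplus_\Omega Z$, whose quasi-triangle constant is $1+Z(\Omega)$. Absolute summability of the vectors is enough to yield convergence (directly in the Banach case, say when $Y,Z$ have non-trivial type, and via Aoki--Rolewicz passage to an equivalent $p$-norm in the quasi-Banach case), but the quasi-triangle estimate is precisely what generates the $Z(\Omega)$ factor in the final constant. The second part is conceptually cleaner, its only non-trivial input being the scalar Kalton--Peck triviality theorem on $B$-convex spaces applied coordinate by coordinate, the $\ell_1$-weights $s=(s_n)$ being responsible for aggregating the individual estimates into the claimed global bound.
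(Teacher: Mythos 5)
Your argument is correct and follows essentially the same route as the paper: your linear map $\tilde L(x)=\sum_n s_n x_n^*(x)\,\Omega(z_n)$ together with the estimate $\|\Omega(Nx)-\tilde L(x)\|\le Z(\Omega)\sum_n|s_n x_n^*(x)|$ is exactly what the paper obtains by factoring $N$ through $\ell_1$, and your second part coincides with the paper's coordinatewise application of Kalton's theorem that quasi-linear functionals on $B$-convex spaces are uniformly trivial. The only cosmetic difference is that you package the first estimate as a lift into $Y\oplus_\Omega Z$; to get the stated constant one should apply the $n$-term $z$-linearity inequality for $\Omega$ directly rather than the two-term quasi-triangle inequality of the twisted sum, which would only give $1+Z(\Omega)$.
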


\begin{proof} The operator $N: X \to Z$ factorizes as $N = zDf$ where $f: X \to \ell_\infty$ is $f(x) = (x_m^*(x))_m$,  $D: \ell_\infty  \to \ell_1$ is
$D(\xi)=(\xi_m s_m)_m$ and  $z: \ell_1\to Z$ is $z(\xi)= \sum_{m} \xi_m z_m$. The fact that $N: X \to Z$ factorizes through $\ell_1$ guarantees that $\Omega N$ is trivial on $X$. Indeed if $L$ is the linear map defined on $c_{00}$ by $L(\xi)=\sum_m \xi_m \Omega(z_m)$, then $\|(\Omega z- L)(\xi)\| \leq Z(\Omega) \|\xi\|_1$.
Therefore for $x \in X$, $$\| \Omega N (x) - LDf (x) \| = \|(\Omega z - L) Df(x)\| \leq
Z(\Omega) \|Df(x)\|_1 \leq Z(\Omega) \|s\|_1 \|x\|.$$
We conclude by noting that $LDf$ is linear.
When $X=Y$ the fact that $N$ factorizes through $\ell_\infty$ guarantees that $N \Omega$ is trivial. Indeed according to \cite[Proposition 3.3]{kaltlocb}  each coordinate of the map $f \Omega: Z \to \ell_\infty$ may be approximated by a linear map $\ell_n: Z \to \R$ with constant $c_Z Z(\Omega)$, for some constant $c_Z$; therefore $f \Omega$ is at finite distance
$c_Z Z(\Omega)$ from a linear map $L: Z \to \ell_\infty$, and $N \Omega=zD f\Omega$ is at distance $c_Z Z(\Omega) \|s\|_1$ to $zDL$.
\end{proof}

The last ingredient we need is in the next lemma. Recall \cite{kaltloc} that a quasi-linear map $\Omega: X \to  Z$ is said to be {\em locally trivial} if there exists $C>0$ such that or any finite dimensional subspace $F$ of $X$, there exists a linear map $L_F$ such that $\|\Omega_{|F}-L_F\| \leq C$.

\begin{lemma}\label{disj}
Let $\Omega: X \to Z$ be a quasi-linear map, where $X$ is  a K\"othe space and $Z$ is reflexive. The following assertions are equivalent:
\begin{itemize}
\item[(i)] $\Omega$ is trivial.
\item[(ii)] $\Omega_{|Y}$ is trivial for any subspace $Y$ generated by disjoint vectors of $X$.
\item[(iii)] There exists $C>0$ such that for any finite dimensional subspace $F$ generated by disjoint vectors of $X$, there exists a linear map $L_F$ such that $\|\Omega_{|F}-L_F\| \leq C$.
\item[(iv)] $\Omega$ is locally trivial.
\end{itemize}
\end{lemma}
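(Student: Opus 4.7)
The proof can be organized as the cycle $(i)\Rightarrow(ii)\Rightarrow(iii)\Rightarrow(iv)\Rightarrow(i)$. The first implication is immediate: a decomposition $\Omega=B+L$ with $B$ homogeneous bounded and $L$ linear restricts to any closed subspace as a decomposition of the same type. For $(ii)\Rightarrow(iii)$, I would reason by contradiction, turning a failure of uniformity into a single disjoint counterexample. If no uniform $C$ works, one chooses disjointly-generated finite-dim $F_n$ with $d(\Omega|_{F_n},\ell(F_n,Z))\to\infty$; using the K\"othe lattice structure of $X$ and the fact that $X$, being infinite-dimensional, accommodates disjointly supported copies of arbitrary finite configurations, one translates each $F_n$ to a subspace $\widetilde F_n$ whose support avoids the supports of $\widetilde F_1,\ldots,\widetilde F_{n-1}$. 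Then $Y=\overline{\operatorname{span}}\bigcup_n\widetilde F_n$ is generated by the globally disjoint concatenation of the generating sequences of the $\widetilde F_n$'s, and no linear map on $Y$ can approximate $\Omega|_Y$ within a bounded distance, contradicting $(ii)$.

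For $(iii)\Rightarrow(iv)$, I plan to reduce an arbitrary finite-dim $F\subset X$ to a disjointly-generated finite-dim subspace $F'$. In a K\"othe function space one takes a conditional expectation $E$ onto the step functions of a finite $\sigma$-algebra refining the supports of a basis of $F$; in a K\"othe sequence space one truncates to the first $N$ coordinates. Both yield a contractive $E$ on $X$ with $\|Ex-x\|_X<\varepsilon$ for $x$ in the unit ball of $F$, provided the $\sigma$-algebra is sufficiently fine (resp.\ $N$ is large enough). The linear approximant $L_{F'}$ supplied by $(iii)$ is then transferred back via $L_F=L_{F'}\circ E|_F$; the quasi-linear error $\|\Omega(x)-\Omega(Ex)\|$ is absorbed into a uniform constant using the local boundedness property of quasi-linear maps between Banach spaces together with the quasi-linear identity $\Omega(x)-\Omega(Ex)-\Omega(x-Ex)=O(\|x\|)$.

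Finally, $(iv)\Rightarrow(i)$ is Kalton's theorem: a locally trivial quasi-linear map into a Banach space with the Radon-Nikodym property, in particular into a reflexive $Z$, is globally trivial. The argument glues the local approximants $L_F$ on the directed system of finite-dim subspaces into a global linear map $L$ by weak compactness: fixing an ultrafilter $\mathcal{U}$ on the net of finite-dim subspaces of $X$ and setting $L(x)$ to be the weak limit along $\mathcal{U}$ of $L_F(x)$ for $F\ni x$ yields a well-defined linear map with $\|\Omega-L\|\leq C$, since each net $(L_F(x))_{F\ni x}$ is bounded in $Z$ by $C\|x\|+\|\Omega(x)\|$ and hence relatively weakly compact. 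I expect the main obstacle to lie in the approximation step $(iii)\Rightarrow(iv)$: controlling $\|\Omega(x)-\Omega(Ex)\|$ when $Ex$ is close to $x$ requires care, because $\Omega$ need not be norm-continuous on balls, and the transfer of the linear approximant through $E$ must be balanced against the intrinsic roughness of a quasi-linear map.
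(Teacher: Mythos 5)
The cycle you propose, $(i)\Rightarrow(ii)\Rightarrow(iii)\Rightarrow(iv)\Rightarrow(i)$, is the same as the paper's, and three of your four implications are sound, but there is a genuine gap in $(ii)\Rightarrow(iii)$: the ``translation'' of the bad finite-dimensional subspaces $F_n$ to disjointly supported copies $\widetilde F_n$ is not available. Neither the K\"othe space $X$ nor the quasi-linear map $\Omega$ is assumed rearrangement invariant, so even when a disjointly supported isometric copy of the configuration $F_n$ exists, the quantity $d(\Omega_{|\widetilde F_n},\ell(\widetilde F_n,Z))$ bears no relation to $d(\Omega_{|F_n},\ell(F_n,Z))$; the map $\Omega$ could perfectly well be trivial on the region of the base space where you choose to place $\widetilde F_n$. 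Producing bad finite disjoint configurations whose supports are moreover pairwise disjoint \emph{as given}, without moving them, is the real content of the lemma, and the paper spends most of its proof on exactly this. It first shows that triviality of $\Omega$ on $X(A)$ and on $X(B)$ for disjoint $A,B$ forces triviality on $X(A\cup B)$, and then, by a recursive halving of the base space combined with a pigeonhole argument on the triviality constants, that a nontrivial $\Omega$ admits a splitting $\Sigma=A\cup B$ with $\Omega$ nontrivial on \emph{both} halves. Iterating this splitting yields pairwise disjoint regions $X(A_k)$ on each of which $\Omega$ is nontrivial, hence (by the already established equivalence of triviality and local triviality) not $2k$-trivial on some finite disjointly supported family inside $X(A_k)$; concatenating these families gives the disjointly generated subspace violating $(ii)$. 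Your final concatenation step is fine; it is the production of the pieces that needs this machinery, and your proposal supplies no substitute for it.

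On the remaining steps: your worry about $(iii)\Rightarrow(iv)$ is resolved, not by any continuity of $\Omega$, but essentially as you half-guess, via Lemma \ref{blocks}. One picks a finite-rank (hence nuclear) perturbation $N$ of arbitrarily small nuclear norm with $(I+N)(F)$ contained in a disjointly generated subspace; then $\Omega N_{|F}$ is trivial with small constant by Lemma \ref{blocks}, $\Omega(I+N)_{|F}$ is $C$-trivial by $(iii)$, and quasi-linearity glues the two, giving a constant independent of $F$. Your $(iv)\Rightarrow(i)$ is a correct direct argument (ultrafilter limits of the local approximants, using reflexivity of $Z$ for weak compactness and weak lower semicontinuity of the norm); the paper simply cites this result from the literature.
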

\begin{proof}
Assertions $(i)$ and $(iv)$ are well-known to be equivalent: trivial implies locally trivial while, see \cite{cabecastuni}, a locally trivial quasi-linear map taking values in a space complemented in its bidual is trivial. That $(i)$ implies $(ii)$ is obvious. Let us show that $(iii)$ implies $(iv)$: Let $\Omega$ be a quasi-linear map verifying $(iii)$ and let $F$ be a finite dimensional subspace of $X$. Approximating functions by characteristic functions we may find a nuclear operator $N$ on $X$ of arbitrary small norm so that $(Id+N)(F)$ is contained in the linear span $ [u_n]$ of a finite sequence of disjointly supported vectors. The restriction $\Omega_{|[u_n]}$ is trivial with constant $C$, thus using Lemma \ref{blocks},
$\Omega=\Omega(I+N)-\Omega N$ is trivial with constant $C+\epsilon$ on $F$.
Therefore $(iv)$ holds.\medskip

It remains to show that $(ii)$ implies $(iii)$. Let $\Sigma$ be the $\sigma$-finite base space on which the K\"othe function space $X$ is defined.  For a subset $A\subset \Sigma$ we will denote $X(A)$ the subspace of $X$ formed by those functions with support contained in $A$.\medskip

\textbf{Claim 1.} \emph{If $A$ and $B$ are disjoint and $\Omega$ is trivial on both $X(A)$ and $X(B)$ then it is trivial on $X(A \cup B)$.} Indeed, if $\|\Omega_{|X(A)}-a\| \leq c$ and
$\|\Omega_{|X(B)}-b\| \leq d$, where $a$ and $b$ are linear, then $\|\Omega_{|X(A \cup B)}-(a \oplus b)\| \leq 2(Z(\Omega)+c+d)$, where $a \oplus b$ is the obvious linear map on $X(A \cup B)$.\medskip

\textbf{Claim 2.} \emph{If $\Omega$ is nontrivial on $X$ then $\Sigma$ can be split in two sets $\Sigma = A \cup B$ so that $\Omega_{|X(A)}$ and $\Omega_{|X(B)}$ are both nontrivial}.  We first assume that $\Sigma$ is a finite measure space. Assume the claim does not hold. Split $\Sigma = R_1 \cup I_1$ in two sets  of the same measure and assume $\Omega_{|X(I_1)}$ is trivial. Note that since the claim does not hold, given any  $C \subset \Sigma$ and any splitting $C=A \cup B$ the map $\Omega$ is trivial on $X(A)$ or $X(B)$. So
 split $R_1  = R_2 \cup I_2$ in two sets of equal measure and assume that  $\Omega_{|X(I_2)}$ is trivial, and so on. If $\Omega$ is $\lambda$-trivial on $X(\cup_{j \leq n} I_j)$ for $\lambda<+\infty$ and for all $n$ then $\Omega$ is locally trivial on $X$ and therefore is trivial, a contradiction. If $\lambda_n\to \infty$ is such that $\Omega_{X(\cup_{j \leq n}I_j)}$ is $\lambda_n+1$-trivial but not $\lambda_n$-trivial for all $n$, then by the Fact we note that
for $m<n$, $\Omega$ cannot be trivial with constant less
 than $\lambda_n/2-Z(\Omega)-\lambda_m-1$ on $X(\cup_{m<j \leq n} I_j)$. From this we find a partition of $\N$ as $N_1 \cup N_2$ so that if $A=\cup_{n \in N_1} I_n$ and
$B=\cup_{n \in N_2} I_n$, then $\Omega$ is non trivial on $X(A)$ and $X(B)$, another contradiction.\medskip

 If $\Sigma$ is $\sigma$-finite then the proof is essentially the same: either one can choose the sets $I_n$ all having measure, say, $1$  or at some step $R_m$ is of finite measure, and we are in the previous case. This concludes the proof of the claim.\medskip

We pass to complete the proof that $(ii)$ implies $(iii)$. Assume that $\Omega$ is not trivial on $X$. By the claim, split $\Sigma = A_1 \cup B_1$ so that  $\Omega$ is trivial neither on $X(A_1)$ nor on $X(B_1)$. It cannot be locally trivial on them, so there is a finite number $\{u^1_n\}_{n\in F_1}$ of disjointly supported vectors on $X(A_1)$ on which $\Omega$ is not $2$-trivial. By the claim applied to $X(B_1)$ split $B_1= A_2 \cup B_2$ so that $\Omega$ is  trivial  neither on $X(A_2)$ nor in $X(B_2)$. It cannot be locally trivial on them, so there is a finite number of disjointly supported vectors $\{u^2_n\}_{n\in F_2}$ on $X(A_2)$ on which $\Omega$ is not $4$-trivial. Iterate the argument to produce a subspace $Y$ generated by an infinite sequence
$$ \{u^1_n\}_{n\in F_1} , \{u^2_n\}_{n\in F_2}, \dots, \{u^k_n\}_{n\in F_k}, \dots$$
of disjointly supported vectors, where $\Omega$ cannot be trivial.\end{proof}

We are thus ready to obtain:\medskip

\noindent \textbf{Proof of Theorem \ref{extension}(1)}: Let $\Omega: L_2 \lop L_2$ be an $L_{\infty}$-centralizer on $L_2$ and pick the dual centralizer $\Omega^*: L_2\lop L_2$: if $\Omega$ produces the exact sequence
$$\begin{CD}
0@>>> L_2 @>>> d_\Omega L_2 @>>> L_2 @>>> 0\end{CD}$$ then $\Omega^*$ is is the responsible to produce the dual exact sequence$$\begin{CD}
0@>>> L_2^* @>>> (d_{\Omega} L_2^*)^* @>>> L_2^* @>>> 0.\end{CD}$$

The existence and construction of $\Omega^*$ can be derived from general interpolation theory or from general theory of centralizers \cite{kaltmem,kaltdiff,cabeann}. A more cumbersome construction of the dual quasi-linear map is also possible and can be found in \cite{cabecastdu,castmoredu}.\medskip

We work from now on with $\Omega^*$ and the identification $L_2^*=L_2$. Using that $L_2^*[0,1]= L_2^*[0,1/2]\oplus L_2^*[1/2,1]$, we may assume without loss of generality that $\Omega^*|_{L_2^*[0,1/2]}$ is not trivial.  Then by Lemma \ref{disj},  $\Omega^*$ is  not trivial on  the span $[u_n]$  of  a normalized disjointly supported  sequence  $u=(u_n)_n$ in $L_2^*[0,1/2]$.
Using  Lemma  \ref{bounded} we obtain a sequence $(\lambda^n)_n$ of finitely supported sequences such that $ \nabla_{[\lambda^n u]} \Omega^*> n\|\lambda^n\|_2$ for every $n$.\medskip

On the other hand, it follows from \cite{cabe} that no centralizer on $ L_2^*$ is singular;  actually every such centralizer becomes trivial on the subspace generated by the Rademacher functions. Thus, there is a normalized  sequence $a=(a_n)_n$ in $L_2^*[1/2,1]$ equivalent  to $\ell_2$ such that $ \nabla_{[\lambda a]} \Omega^* \leq C \|\lambda\|_2$, for a fixed constant $C$ and every finitely supported sequence $\lambda$. Then we apply Lemma \ref{nabla} to get a complex structure $u$ on $L_2^*$ so that $(\tau, u)$ is not compatible with $\Omega^*$. Therefore $u^*$ is a complex structure on $L_2$ so that for every operator $\tau$ the couple $(u, \tau)$ is not compatible with $\Omega$.\quad $\Box$\\

We turn our attention to  sequence spaces. Recall that an operator $T$ between two Banach spaces $X$ and $Y$ is said to be {\em super-strictly singular} if every ultrapower of $T$ is strictly singular; which, by localization, means that
there does not exist a number $c>0$  and a sequence of subspaces $E_n\subseteq X$, $\dim E_n=n$, such that $\|Tx\|\geq c\|x\|$ for all $x\in \cup_n E_n$. Super-strictly singular operators have also been called finitely strictly singular; they were first introduced in \cite{M, M2}, and form a closed ideal containing the ideal of compact and contained in the ideal of strictly singular operators. See also \cite{css} for the study of such a notion in the context of twisted sums. An exact sequence $0\to Y \to X  \stackrel{q}\to Z\to 0$ (resp. a quasi-linear map) is called {\em supersingular} if $q$ is super-strictly singular. It follows from the general theory of twisted sums that an exact sequence induced by a map $\Omega: Z \to Y$  is supersingular if and only if for all $C>0$ there exists $n \in \N$ such that for all $F \subset Z$ of dimension $n$, and all linear maps $L$ defined on $F$ such that $\Omega_{|F} - L: F\to Y$ one has
$\|\Omega_{|F}-L\| \geq C$. We conclude the proof of Theorem \ref{extension}:\medskip

\noindent \textbf{Proof of Theorem \ref{extension}(2)}:  From the study in \cite{pliv} we know \cite[Thm 3]{pliv} that a super-strictly singular operator on a $B$-convex space has super-strictly singular adjoint. Therefore, no supersingular quasi-linear maps $\Omega: \ell_2\to \ell_2$ exist, since $B$-convexity is a $3$-space property (see \cite{castgonz}) and  the adjoint of a quotient map is an into isomorphism. Let now $\Omega$ be a centralizer on $\ell_2$, which cannot be supersingular. Let $e=(e_n)_n$ be the canonical basis of $\ell_2$.
We claim that there exists a decomposition $\ell_2=H \oplus H'$, where the two subspaces $H$ and $H'$ of $\ell_2$ are generated by subsequences $f$ and $g$ of $e$, such that $\Omega_{|H}$ is non trivial and $\Omega_{|H'}$ is not supersingular.
Indeed consider $\ell_2=H \oplus H'$ corresponding to the odd and even vectors of the basis.
If, for example, $\Omega_{|H'}$ is trivial then $\Omega_{|H}$ must be non trivial and we are done. So we may assume that $\Omega_{|H}$ and $\Omega_{|H'}$ are non trivial. By the ideal properties of super-strictly singular operators and the characterization of supersingularity, we note that either $\Omega_{|H}$ or $\Omega_{|H'}$ must be non supersingular. So up to appropriate choice of $H$ and $H'$, we are done.\medskip

Since $\Omega_{|H}$ is not trivial, Lemma  \ref{bounded} yields a sequence $f = (f_n)$ of disjointly supported vectors and a sequence $(\lambda^n)_n$ of vectors in $c_{00}$ such that $$ \nabla_{[\lambda^n f]} \Omega> n\|\lambda^n\|_2$$ for every $n$. On the other hand, since $\Omega_{|H'}$ is not supersingular, there exists $C$ and a sequence $G_n$ of $n$-dimensional subspaces of $H'$, with orthonormalized bases $g^n=(g^n_1,\ldots,g^n_n)$, such that $\Omega_{|G_n}=L_n+B_n,$ where $L_n$ is linear and $\|B_n\| \leq C$.
By Lemma \ref{trivial}, for all $\lambda=(\lambda_1,\ldots,\lambda_n)$ one has $\nabla_{[\lambda g^n]} \Omega \leq 2C \|\lambda\|_2$ and therefore
$$\nabla_{[\lambda^n g^n]} \Omega \leq 2C \|\lambda^n\|_2.$$
There is no loss of generality assuming that for different $n,m$ the elements of $g^n$ and $g^m$ are disjointly supported. Thus, pasting all those pieces we construct a sequence $g = (g^1, g^2, \dots, )$ of normalized vectors in $H'$, equivalent to the unit vector basis of $\ell_2$, and we note that $[f]$ and $[b]$ form a direct sum. The conclusion follows from Lemma \ref{nabla} in combination with the duality argument used in the proof of part (1). $\Box$

We conclude this section with several remarks about the previous results and some of their consequences.

\subsection{$\mathscr K$ is not supersingular.} The papers \cite{css,sua} contain proofs that the Kalton-Peck map $\mathscr K$ is not supersingular. A proof in the context of the techniques used in this paper can be presented:
 Consider the interpolation scale $\ell_2=(\ell_p, \ell_{p'})_{1/2}$ for a fixed $p$, where $1/p +1/p'=1$.  For every $n$ even let  us consider the finite sequence $f^n=(f_k^n)_{k=1}^n$ of orthonormalized vectors in $\ell_2$ defined by
\begin{eqnarray*}
f^n_1 &=& \frac{1}{\sqrt{2^n}} \sum_{j=1}^{2^n} (-1)^{j+1}e_{2^n+j-1} \\
f^n_2 &=& \frac{1}{\sqrt{2^n}} \sum_{j=1}^{2^{n-1}} (-1)^{j+1} ( e_{2^n+2(j-1)} + e_{2^n +2j-1})\\
&\vdots&\\
f^n_n &=& \frac{(e_{2^n}+ e_{2^n+1}+ \dotsb+ e_{2^n+2^{n-1}+1} ) - ( e_{2^n+2^{n-1}} + e_{2^n+2^{n-1}+1}+ \dotsb + e_{2^{n+1} -1} )}{\sqrt{2^n}}.
\end{eqnarray*}
For $x=\sn a_kf_k^n$ take the holomorphic function $g_x: \mathbb S \to \Sigma=\ell_p + \ell_{p'}$ given by $g_x(z)= 2^{n(1/2-1/p)(1-2z)} \sn a_kf_k^n$, which obviously satisfies $g_x(1/2)=  x$. To compute the norm of $g_x$ recall Khintchine's  inequality  \cite[Theorem 2.b.3]{lindtzaf}, which yields constants $A_p$, $B_p$ such that
 $$ A_p 2^{(1/p-1/2)n} \sqrt{ \sn \lambda_k^2} \leq \left\| \sn \lambda_k f^n_k \right\|_p \leq  B_p2^{(1/p-1/2)n} \sqrt{ \sn \lambda_k^2}$$
for all $n$ and scalars $\lambda_1, \ldots , \lambda_n$. Thus, $\|g_x\|_{\mathcal H} \leq c_p  \| x \|_2$ for some constant $c_p$ is independent of $n$ and the scalars $a_k$. Since $\mathscr K$ is the centralizer associated to the interpolation scale of the $\ell_p$'s, $\mathscr K = \delta'_{1/2}B$, where $B$ is a homogeneous bounded selection for the evaluation map $\delta_{1/2}$. Thus, by the estimate (\ref{select}), we have a constant $D_p$ such that
\begin{equation}\label{linear} \| \mathscr K (x) - g_x'(1/2)\| \leq D_p \|x\|_2.\end{equation}
 for $x$ in the span of $\{f^n_k\}$. Since $g_x'(1/2)= -2n\log n (1/2-1/p) \sn a_kf_k^n$, it is a linear map, which in particular shows that $\mathscr K$ is not supersingular.

\subsection{A computation of $\nabla \mathscr K$.} The use of supersingularity is necessary when $\Omega$ is singular,  since then no linear map approximates $\Omega$ on $[g] = [g^1, g^2, \dots]$. In the case of Kalton-Peck space, an explicit example of two sequences in which $\nabla \mathscr K$ behaves differently can be explicitly given: one is the canonical basis $a_n=(e_{n_k})_{k=1}^n$ where $\nabla_{[a_n]}\mathscr K = \frac{1}{2} \sqrt n \log n$ and the other the concatenation of the finite sequences $f^n=(f_k^n)_{k=1}^n$ on which $\nabla_{[f^n]}\mathscr K \leq D_p\sqrt{n}$ as it immediately follows from the inequality (\ref{linear}) above.

\subsection{Initial and final twisted Hilbert spaces.}
Theorem \ref{extension} answers a question in \cite{moretesis} raised by homological consideration about the existence of initial or final objects in the category of twisted Hilbert spaces. In its bare bones, the question is whether there is an \emph{initial} twisted Hilbert space
$$\begin{CD} 0@>>> \ell_2@>>> Z @>>> \ell_2 @>>>0;\end{CD}$$ i.e., a twisted Hilbert space such that for any other twisted Hilbert space
$$\begin{CD} 0@>>> \ell_2@>>> TH@>>> \ell_2 @>>>0\end{CD}$$ there is an operator $\tau: \ell_2 \to \ell_2$ and a  commutative diagram
$$\begin{CD} 0@>>> \ell_2@>>> Z @>>> \ell_2 @>>>0\\
&&@V{\tau}VV @VVV @|\\
0@>>> \ell_2@>>> TH @>>> \ell_2 @>>>0.\end{CD}$$
In particular it was asked whether the Kalton-Peck space could be an initial twisted Hilbert space. The dual question about the existence of a final twisted Hilbert space was also posed, meaning the existence of
a twisted Hilbert space $Z$ such that for any twisted Hilbert space $TH$, there exists $\tau: \ell_2 \rightarrow \ell_2$ and a commutative diagram
$$\begin{CD} 0@>>> \ell_2@>>> Z @>>> \ell_2 @>>>0\\
&&@|  @VVV @ V{\tau}VV\\
0@>>> \ell_2@>>> TH @>>> \ell_2 @>>>0.\end{CD}$$

A consequence of Theorem \ref{extension} is:

\begin{prop} No twisted Hilbert space induced by a centralizer on either $\ell_2$ or $L_2$ can be initial or final. \end{prop}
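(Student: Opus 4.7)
I would reduce the initiality (resp.\ finality) of $Z:=d_\Omega H$, for a nontrivial centralizer $\Omega$ on $H\in\{\ell_2,L_2\}$, to the claim that every bounded operator on $H$ lifts (resp.\ extends) to $Z$, and then derive a contradiction from Theorem~\ref{extension}.

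To rule out initiality, I pick any bounded $v:H\to H$. The composition $\Omega v$ is quasi-linear, so $d_{\Omega v}H$ is a twisted Hilbert space. Initiality of $Z$ supplies $\tau:H\to H$ and a commutative diagram between $Z$ and $d_{\Omega v}H$ with $\tau$ on the subspace side and the identity on the quotient; unfolding compatibility, this is exactly $\tau\Omega\equiv\Omega v$, i.e.\ $(\tau,v)$ is compatible with $\Omega$, so by Lemma~\ref{crit} $v$ lifts to an operator on $Z$. Hence initiality forces every bounded operator on $H$, and in particular every complex structure, to be liftable to $Z$. Now I apply Theorem~\ref{extension} to the dual centralizer $\Omega^*$ (again a nontrivial centralizer on $H^*\cong H$): this produces a complex structure $w$ on $H$ that does not extend to $d_{\Omega^*}H$. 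The standard duality of compatibility---$(w,\sigma)$ compatible with $\Omega^*$ if and only if $(\sigma^*,w^*)$ compatible with $\Omega$---translates the non-extendability of $w$ into the non-liftability of $w^*$ to $Z$, contradicting the previous conclusion.

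Finality I would treat symmetrically via the identification $(d_\Omega H)^*\cong d_{\Omega^*}H$: finality of $Z$ is equivalent to initiality of $d_{\Omega^*}H$, and the argument above forces every bounded operator on $H$ to lift to $d_{\Omega^*}H$, equivalently---by duality---every bounded operator on $H$ to extend to $Z$. Theorem~\ref{extension} applied to $\Omega$ itself directly contradicts this. There is no real obstacle beyond keeping the duality bookkeeping straight between extension on one side of a short exact sequence and lifting on the dual side; the ancillary fact that the dual of a nontrivial centralizer is a nontrivial centralizer is standard and is invoked already in the proof of Theorem~\ref{extension}(1).
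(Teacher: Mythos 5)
Your proposal is correct and follows essentially the same route as the paper: initiality (resp.\ finality) applied to the twisted sum generated by $\Omega v$ (resp.\ $v\Omega$) forces $(\tau,v)$-compatibility with $\Omega$ for some $\tau$, which is then contradicted by the non-liftable (resp.\ non-extendable) complex structure supplied by Theorem~\ref{extension} and Lemma~\ref{nabla}. Your extra pass through $\Omega^*$ and the identification of finality of $d_\Omega H$ with initiality of $d_{\Omega^*}H$ is just the same duality bookkeeping already carried out inside the proof of Theorem~\ref{extension}(1), so it is a presentational variant rather than a different argument.
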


\begin{proof} Let $H$ denote either $\ell_2$ or $L_2$. Assume $H\oplus_\Omega H$ is the twisted Hilbert space induced by a nontrivial centralizer $\Omega$. Let $u$ be an isomorphism on $H$ that cannot be lifted to an operator on $H\oplus_\Omega H$ provided by Theorem \ref{extension}. This means that $\Omega u$ is never equivalent to  $\tau \Omega$ for no operator $\tau$. I.e., $\Omega$ is not initial. A similar argument, using an isomorphism on $H$ that cannot be extended to an operator on $H\oplus_\Omega H$ shows that $\Omega$ cannot be final.
\end{proof}

\subsection{About $\ell_2$-automorphy and $\ell_2$-extensibility.}
 Theorem \ref{extension} shows that no twisted sum $d_\Omega \ell_2$ or $d_\Omega L_2$ generated by a non-trivial centralizer can be {\em $\ell_2$-extensible}. A space $X$ is said to be $Y$-extensible if for every subspace $A$ of $X$ isomorphic to $Y$ every operator from  $A$ to $X$ can be extended to $X$. This notion was introduced in \cite{moreplic} in connection with the automorphic space problem of Lindenstrauss and Rosenthal \cite{lindrose}. A space $X$  is {\em automorphic  (resp. $Y$-automorphic)} when isomorphisms between subspaces of $X$ of same codimension (resp. and isomorphic to $Y$) can be extended to automorphisms of $X$. It was known \cite[p. 675]{cfm} that no non-trivial twisted Hilbert space can be automorphic and of course $Z_2$ cannot be $\ell_2$-automorphic  since $Z_2$ is isomorphic to its square. As a rule, $Y$-automorphic implies $Y$-extensible although the converse fails (see \cite{castmoreisr,moreplic,castplic,cfm} for details).


\section{Complex structures on $Z_2$}

 We focus now on the Kalton-Peck spaces; i.e., the twisted sums obtained from the scale of $L_p$-spaces with base space $[0,1]$ as $L_\infty$-modules, for which  $(L_\infty, L_1)_{1/2} = L_2$; and the scale of $\ell_p$-spaces as $\ell_\infty$-modules for which $(\ell_1, \ell_\infty)_{1/2} = \ell_2$. The associated centralizer in these two scales is $-2\mathscr K$, where
$$ \mathscr K (f) = f \log \frac{|f|}{\|f\|_2}.$$ Indeed, pick a bounded homogeneous selection for $\delta_{1/2}$, say $B_{1/2}(x)(z)=x^{2(1-z)}$  when $x\geq 0$  $\|x\|_2=1$, to get $\delta_{1/2}'B_{1/2}x= -2x\log x$ when $\|x\|=1$. The corresponding twisted sums will be called the Kalton-Peck spaces $Z_2 = d_{\mathscr K} \ell_2$ and $Z_2^F = d_{\mathscr K} L_2$, and likewise for the associated exact sequences. \medskip

In this section we will study in detail natural complex structures on $Z_2$. Recall that $\sigma: \N\to \N$ is the permutation $\sigma = (2,1)(4,3) \dots (2n, 2n-1) \dots $ and that $\omega$ is the linear map defined as
 $$\omega((x_n)) = ((-1)^n x_{\sigma(n)}).$$ From the results obtained so far we know that since the complex structure $\omega$ is an operator on the scale of $\ell_p$-spaces, $Z_2$ admits the complex structure $(\omega, \omega)$, which extends $\omega$. And also that there is a complex structure on $\ell_2$ that does not extend to an operator on $Z_2$. Analogous results hold for $Z_2^F$. There are actually at least three natural ways to consider complex structures on $Z_2$. The first, as we already said, is to simply consider those extended from complex structures on the scale of $\ell_p$, like $(\omega, \omega)$. We will call $Z_2^\omega$ the resulting space.
Secondly, the space $Z_2 \oplus Z_2$  can be endowed with the complex structure $U(x,y)=(-y,x)$. Since $Z_2 \simeq Z_2 \oplus Z_2$ this yields a complex structure on $Z_2$.  We will call this complex space $Z_2\oplus _\C Z_2$. There is a third way to define a natural complex space associated to $Z_2$: by directly forming the the complex Kalton-Peck space $Z_2(\mathbb C)$ \cite{kaltpeck} starting with the complex space $\ell_2(\mathbb C)$ and using the complex Kalton-Peck map $\mathscr K^\mathbb C(x) =x \log\frac{|x|}{\|x\|}$. It turns out that all of them are isomorphic.

\begin{prop}\label{complexifZ2} $\;$
 The spaces  $Z_2\oplus_{\mathbb C} Z_2$, $Z_2^{\omega}$ and $Z_2 (\mathbb{C})$ are isomorphic.
\end{prop}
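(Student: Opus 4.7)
The plan is to prove both isomorphisms $Z_2^{\omega} \cong Z_2(\C)$ and $Z_2 \oplus_{\C} Z_2 \cong Z_2(\C)$ directly, by showing that under the real-linear identification $\ell_2 \oplus \ell_2 \cong \ell_2(\C)$ given by $(a,c) \mapsto a+ic$, the relevant quasi-linear maps become equivalent (differ by a bounded homogeneous perturbation). Under this identification, both $\omega$ (after pairing the standard basis vectors) and the swap $(a,c)\mapsto(-c,a)$ become multiplication by $i$ on $\ell_2(\C)$.

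First I would handle $Z_2^{\omega} \cong Z_2(\C)$. Pairing $e_{2k-1}+ie_{2k} \leftrightarrow \epsilon_k$ identifies $\ell_2$ endowed with $\omega$ with $\ell_2(\C)$. A direct check gives $\mathscr K \omega = \omega \mathscr K$, so $\mathscr K$ descends to a $\C$-linear quasi-linear map on $\ell_2(\C)$. Since $\|x\|_2 = \|z\|_2$ under the pairing, the $(2k-1)$-th coordinate of the difference $\mathscr K(z)-\mathscr K^{\C}(z)$ is
\[
x_{2k-1}\log\frac{|x_{2k-1}|}{|z_k|}, \qquad |z_k|=\sqrt{x_{2k-1}^{2}+x_{2k}^{2}},
\]
and analogously for the $(2k)$-th coordinate. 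Applying $t\log(1/t)\leq 1/e$ on $(0,1]$ with $t=|x_{2k-1}|/|z_k|$ gives $|x_{2k-1}\log(|x_{2k-1}|/|z_k|)| \leq |z_k|/e$, and summing yields $\|\mathscr K - \mathscr K^{\C}\|_{\rm hom}=O(1)$. Thus $\mathscr K$ and $\mathscr K^{\C}$ are equivalent centralizers on $\ell_2(\C)$; the identity between the corresponding twisted sums is an $\R$-linear isomorphism, and since both centralizers commute with $i$, the common complex structure $(i,i)$ makes this identity $\C$-linear.

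Second, I would treat $Z_2 \oplus_{\C} Z_2 \cong Z_2(\C)$. Regrouping coordinates,
\[
Z_2 \oplus Z_2 \;=\; (\ell_2 \oplus \ell_2) \oplus_{\mathscr K \oplus \mathscr K} (\ell_2 \oplus \ell_2),
\]
and under $\ell_2 \oplus \ell_2 \cong \ell_2(\C)$ the complex structure $U(x,y)=(-y,x)$ becomes multiplication by $i$ on each factor, while $\mathscr K\oplus\mathscr K$ becomes the complexification $\mathscr K_{\C}(a+ic)=\mathscr K(a)+i\mathscr K(c)$, which commutes with $i$ by $\R$-homogeneity. Splitting
\[
\log\frac{|a_k|}{\|a\|} - \log\frac{|z_k|}{\|z\|} \;=\; \log\frac{|a_k|}{|z_k|} \;+\; \log\frac{\|z\|}{\|a\|},
\]
the difference $\mathscr K_{\C}-\mathscr K^{\C}$ decomposes into four homogeneous pieces, each bounded: the pieces $a_k\log(|a_k|/|z_k|)$ (and the imaginary analogue) by the Step~1 estimate, and the pieces $a_k\log(\|z\|/\|a\|)$ (and its analogue) via $\|a\|\log(\|z\|/\|a\|)\leq \|z\|/e$, again by $t\log(1/t)\leq 1/e$ with $t=\|a\|/\|z\|$. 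Hence $\mathscr K_{\C}\equiv \mathscr K^{\C}$, and the identity map gives a $\C$-linear isomorphism $Z_2\oplus_{\C} Z_2 \cong Z_2(\C)$.

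The main obstacle is the boundedness estimate $\|\mathscr K_{\C}-\mathscr K^{\C}\|=O(1)$ in Step~2, since the correction term $\|a\|\log(\|z\|/\|a\|)$ naively diverges as $\|a\|/\|z\| \to 0$. The decisive point, going back to \cite{kaltpeck}, is the elementary bound $t\log(1/t) \leq 1/e$ on $(0,1]$, which simultaneously controls both the coordinatewise and the norm-ratio contributions. Once the perturbations are known to be bounded and to commute with multiplication by $i$, Lemma~\ref{crit} (applied with $\tau=\sigma=i$) makes the induced identity maps between twisted sums $\C$-linear automatically, yielding the chain $Z_2^{\omega}\cong Z_2(\C)\cong Z_2\oplus_{\C} Z_2$.
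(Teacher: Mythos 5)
Your proposal is correct, and one of its two legs coincides with the paper's: the isomorphism $Z_2^{\omega}\cong Z_2(\C)$ is proved in the paper by exactly your pairing $x\mapsto \sum(x_{2n-1}+ix_{2n})e_n$ together with the coordinatewise estimate on $x_{2k-1}\log(|x_{2k-1}|/|z_k|)$ (the paper bounds $t^2\log^2(1/t)\leq 1$ where you use $t\log(1/t)\leq 1/e$; same elementary fact). The other leg is where you genuinely diverge. The paper compares $Z_2^{\omega}$ with $Z_2\oplus_{\C}Z_2$ directly, via the odd/even splitting $T(x,y)=((x_i,y_i),(x_p,y_p))$, and does the norm estimate for $T$ by hand; the correction terms $|y_{2n-1}|\,|\log\|y\|_2|$ and $|y_{2n}|\,|\log(\|y\|_2/\|y_p\|_2)|$ that appear there are controlled by $|t\log(1+1/t^2)|\leq 1$. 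You instead regroup $Z_2\oplus Z_2$ as $(\ell_2\oplus\ell_2)\oplus_{\mathscr K\oplus\mathscr K}(\ell_2\oplus\ell_2)$, identify $\ell_2\oplus\ell_2$ with $\ell_2(\C)$, and compare the complexified centralizer $\mathscr K(a)+i\mathscr K(c)$ with $\mathscr K^{\C}(a+ic)$; the norm-ratio term $\|a\|\log(\|z\|/\|a\|)\leq\|z\|/e$ plays the role of the paper's $\log\|y\|_2$ corrections. The two computations are of the same difficulty, but your formulation has the advantage of isolating a clean statement -- the complexification of $\mathscr K$ is boundedly equivalent to the complex Kalton--Peck map -- and of making $\C$-linearity of the resulting isomorphisms automatic (identity maps commuting with $(i,i)$) rather than something to verify for an explicit coordinate-shuffling operator. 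The implicit steps you rely on (that the regrouping $((w_1,x_1),(w_2,x_2))\mapsto((w_1,w_2),(x_1,x_2))$ is an isomorphism intertwining $U$ with the factorwise swap, and that $[i,\mathscr K_{\C}]=0$) are routine and correct.
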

\begin{proof} As usual, $\{e_n\}$ will be the canonical basis of $\ell_2$. Thus, if $x=\sum x_ne_n \in \ell_2$, then
$u(x)= \sum -x_{2n}e_{2n-1} + \sum x_{2n-1}e_{2n}.$ The isomorphism $T: Z_2^{\omega} \to Z_2\oplus_{\mathbb C}Z_2$
can be defined by the $\mathbb{C}$-linear map
$$T(x,y)=\left(  \left(\sum x_{2n-1}e_n, \sum y_{2n-1}e_n\right),  \left(\sum x_{2n}e_n, \sum y_{2n}e_n\right)\right),$$
for $(x, y)=(\sum x_ne_n, \sum y_ne_n)\in Z_2$. Let us write, $x_i= \sum x_{2n-1}e_n$ and $x_p= \sum x_{2n}e_n$. One has, $\|T(x,y)\|= \|((x_i,y_i), (x_p,y_p))\|= \|(x_i,y_i)\|+ \|(x_p,y_p)\|$.\\

Without loss of generality, suppose $\|y_i\|_2=1$ e $\|y_p\|_2\leq 1$. In this case $\|y\|_2^2= \|y_i\|_2^2 + \|y_p\|_2^2= 1+  \|y_p\|_2^2$. Now, since
$ \|(x_i,y_i)\| = \| x_i- \mathscr K y_i\|_2 + \|y_i\|_2$, for every $n$ we have,
\begin{eqnarray*}
|(x_i)_n- (\mathscr K y_i)_n| &=& \left|  x_{2n-1} - y_{2n-1} \log \frac{ \|y_i\|_2}{|y_{2n-1}|} \right|\\
&\leq& \left|  x_{2n-1} - y_{2n-1} \log \frac{ \|y\|_2}{|y_{2n-1}|} \right| + |y_{2n-1}| \left| \log \frac{ \|y\|_2}{|y_{2n-1}|}  -\log \frac{ \|y_i\|_2}{|y_{2n-1}|} \right|\\
&=&  \left|  x_{2n-1} - y_{2n-1} \log \frac{ \|y\|_2}{|y_{2n-1}|} \right| + |y_{2n-1}| \left| \log \|y\|_2 \right|\
\end{eqnarray*}
Therefore
\begin{eqnarray*}
\|(x_i,y_i)\|  &\leq&   \|(x,y)\|+ \|y_i\|_2 \left|\log \|y\|_2 \right| + \|y_i\|_2 \\
&=&   \|(x,y)\|+ \|y_i\|_2  \left( 1+ \frac{1}{2}\left|\log (1+\|y_p\|_2^2) \right| \right)\\
&\leq& \|(x,y)\| + (1+\log \sqrt 2) \|y\|_2\\
&\leq& (2+\log \sqrt 2 ) \|(x,y)\|
\end{eqnarray*}

Analogously, since $ \|(x_p,y_p)\| = \| x_p- \mathscr K y_p\|_2 + \|y_p\|_2$, we have for every $n$,
\begin{eqnarray*}
|(x_p)_n- (\mathscr K y_p)_n| &=& \left|  x_{2n} - y_{2n} \log \frac{ \|y_p\|_2}{|y_{2n}|} \right|\\
&\leq& \left|  x_{2n} - y_{2n} \log \frac{ \|y\|_2}{|y_{2n}|} \right| + |y_{2n}| \left| \log \frac{ \|y\|_2}{|y_{2n}|}  -\log \frac{ \|y_p\|_2}{|y_{2n}|} \right|\\
&=&  \left|  x_{2n} - y_{2n} \log \frac{ \|y\|_2}{|y_{2n}|} \right| + |y_{2n}| \left| \log \frac{\|y\|_2}{\|y_p\|_2} \right|\\
&=&\left|  x_{2n} - y_{2n} \log \frac{ \|y\|_2}{|y_{2n}|} \right| +  \frac{1}{2}|y_{2n}| \left| \log \left( 1+ \frac{1}{\|y_p\|_2^2}\right) \right|
\end{eqnarray*}

Therefore

\begin{eqnarray*}
\|(x_p,y_p)\|  &\leq&   \|(x,y)\|+ \frac{1}{2}\|y_p\|_2 \left| \log \left( 1+ \frac{1}{\|y_p\|_2^2}\right) \right|
 + \|y_p\|_2 \\
&=&   \|(x,y)\|+ \frac{1}{2} +  \|y_p\|_2\\
&\leq& \|(x,y)\| + 2 \|y\|_2\\
&\leq& 3 \|(x,y)\|
\end{eqnarray*}because $|t\log(1+1/t^2)|\leq 1$ for $0<t<1$. So, we conclude that $\|T(x,y)\|\leq (5+\log \sqrt 2)\|(x,y)\|$. The general case is immediate.\medskip

We show the isomorphism between $Z_2^\omega$ and $Z_2(\C)$.  Denote by $\{f_n\}$ the canonical basis for $\ell_2(\mathbb C)$ and define the complex isomorphism $A: \ell_2^\omega \rightarrow \ell_2(\mathbb C)$ given by
$$A(\sum x_nf_n)= \sum (x_{2n-1} + ix_{2n})e_n.$$

The (real) Kalton-Peck map $\mathscr K$ makes sense as a complex quasi-linear map $\ell_2^\omega\To \ell_2^\omega$ since it is $\mathbb C$-homogeneous: $\mathscr K (i\cdot x) = \mathscr K(\omega x) = \omega \mathscr K(x) = i\cdot \mathscr K(x)$. Let us show that there is a commutative diagram:

$$\begin{CD} 0@>>> \ell_2^\omega @>>> \ell_2^\omega \oplus_{\mathscr K} \ell_2^\omega @>>> \ell_2\omega@>>>0\\
&&@VAVV @V(A,A)VV @VVAV\\
0@>>> \ell_2(\mathbb C) @>>> \ell_2(\mathbb C) \oplus_{\mathscr K^c} \ell_2(\mathbb C) @>>> \ell_2(\mathbb C) @>>> 0;\end{CD}$$

for which we will check that $A\mathscr K - \mathscr K^cA$ is bounded on finitely supported sequences. Since

$$ A\mathscr K(x) =   A( \sum x_n\log \frac{\|x\|_2}{|x_n|} f_n) = \sum  \left( x_{2n-1}\log \frac{\|x\|_2}{|x_{2n-1}|}  + i x_{2n}\log \frac{\|x\|_2}{|x_{2n}|} \right)e_n$$
and

$$\mathscr K^c A(x) =   \mathscr K^c \left( \sum (x_{2n-1} + ix_{2n})e_n \right) = \sum  \left( (x_{2n-1}+ ix_{2n})\log \frac{\|x\|_2}{|x_{2n-1}+ i x_{2n}|}  \right)e_n$$

one gets

\begin{eqnarray*}
\| A\mathscr K(x)- \mathscr K^c A(x)\|^2  &=&  \left\| \sum \left( x_{2n-1} \log \frac{| x_{2n-1}+ix_{2n}|}{|x_{2n-1}|}  + ix_{2n} \log \frac{| x_{2n-1}+ix_{2n}|}{|x_{2n}|}\right)e_n\right\|^2\\
&=& \sum  |x_{2n-1} + ix_{2n}|^2 \left| \frac{ x_{2n-1}}{x_{2n-1} + ix_{2n}} \right|^2 \left| \log \frac{|x_{2n-1} + ix_{2n}|}{ |x_{2n-1}|}\right|^2\\
&+&  \sum  |x_{2n-1} + ix_{2n}|^2 \left| \frac{ x_{2n}}{x_{2n-1} + ix_{2n}} \right|^2 \left| \log \frac{|x_{2n-1} + ix_{2n}|}{ |x_{2n}|}\right|^2\\
&\leq& 2\sum |x_{2n-1} + ix_{2n}|^2\\
&=& 2\|x\|_2^2.
\end{eqnarray*}
\end{proof}

\begin{cor}
$\;$
 \begin{enumerate}
\item For any complex structure $u$ on $Z_2$, the space $Z_2^u$ is isomorphic to a complemented subspace of $Z_2(\mathbb C)$.
\item For any complex structure $u$ on $Z_2$, the space $Z_2^u$ is $Z_2(\mathbb C)$-complementably saturated and $\ell_2(\mathbb C)$-saturated.
\end{enumerate}
\end{cor}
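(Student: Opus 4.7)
I would use the standard complexification identification. For any complex structure $u$ on a real Banach space $X$, write $X^{-u}$ for the complex space obtained by letting $i$ act as $-u$. The $\mathbb{R}$-linear map
\[ \phi : X^u \oplus X^{-u} \longrightarrow X \oplus X, \qquad \phi(a,b) = (a+b,\, u(a-b)) \]
is a bounded isomorphism (its inverse is $(x,y)\mapsto \tfrac{1}{2}(x-uy,\, x+uy)$), and a direct check shows it intertwines the complex structure $(u,-u)$ on the domain with the canonical complexification structure $J(x,y)=(-y,x)$ on the codomain. Taking $X = Z_2$, this gives a $\mathbb{C}$-linear isomorphism $Z_2^u \oplus_{\mathbb{C}} Z_2^{-u} \simeq (Z_2\oplus Z_2, J) = Z_2 \oplus_{\mathbb{C}} Z_2$, and by Proposition \ref{complexifZ2} the latter is isomorphic to $Z_2(\mathbb{C})$. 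Therefore $Z_2^u$ embeds as a complex complemented subspace of $Z_2(\mathbb{C})$.

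\textbf{Plan for part (2).} Once (1) is available, one uses that a complex complemented embedding $Z_2^u \hookrightarrow Z_2(\mathbb{C})$ with associated projection $P$ allows to transfer saturation properties: if $V$ is a closed infinite-dimensional complex subspace of $Z_2^u$ and $W\subset V$ is a complex subspace that is complemented in $Z_2(\mathbb{C})$ via a projection $Q$, then $Q|_{Z_2^u}$ is a (complex) projection of $Z_2^u$ onto $W$, so $W$ is also complemented in $Z_2^u$. The task is thereby reduced to showing that $Z_2(\mathbb{C})$ itself is $\ell_2(\mathbb{C})$-saturated and $Z_2(\mathbb{C})$-complementably saturated. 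These are the direct complex analogs of Kalton-Peck's classical results on the real $Z_2$: using the unconditional basis of $Z_2(\mathbb{C})$, any infinite-dimensional complex subspace contains (by a Bessaga-Pe\l{}czy\'nski type extraction) an almost block basic sequence with respect to $(e_n)$; flat disjointly supported block sequences are equivalent to the canonical basis of $\ell_2(\mathbb{C})$ and span complemented copies of $\ell_2(\mathbb{C})$, while by dilating the $\ell_\infty$-profiles in the usual Kalton-Peck manner one produces complemented copies of $Z_2(\mathbb{C})$ inside any infinite-dimensional complex subspace.

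\textbf{Main obstacle.} The only conceptual input is the complexification identity, which is almost forced by the equations $\phi(ua,-ub) = J\phi(a,b)$. The work in (2) is bookkeeping: one has to verify that the Kalton-Peck saturation arguments, originally written for real scalars, go through unchanged with complex scalars; this is expected since nothing in those arguments distinguishes $\mathbb{R}$ from $\mathbb{C}$ beyond the use of the unconditional basis of $Z_2$ and estimates involving $|\cdot|$ and $\log|\cdot|$, which are scalar-field agnostic. Once those complex saturation statements are granted, the projection-restriction argument of the previous paragraph finishes the proof.
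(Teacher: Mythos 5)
Your argument is essentially the paper's. For (1) the paper uses exactly the decomposition you describe: $X\oplus_{\mathbb C}X$ splits as the graph of $u$ plus the graph of $-u$, which are $\mathbb C$-linearly isomorphic to $X^{-u}$ and $X^{u}$ respectively; your map $\phi$ is that decomposition written as an isomorphism. (One harmless sign slip: as written, $\phi$ intertwines $(u,-u)$ with $-J$ rather than $J$; this does not matter since $(x,y)\mapsto(x,-y)$ is a $\mathbb C$-linear isomorphism between $(X\oplus X)^{-J}$ and $(X\oplus X)^{J}$.) For (2) the paper likewise just reduces to the saturation properties of $Z_2(\mathbb C)$ due to Kalton--Peck, and your transfer step --- restricting to $Z_2^u$ a projection of $Z_2(\mathbb C)$ onto a subspace of $Z_2^u$ --- is correct. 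Two factual slips in your sketch of the Kalton--Peck input should be corrected, though. First, $Z_2(\mathbb C)$ does \emph{not} have an unconditional basis --- that it fails to have one is one of Kalton--Peck's main theorems; it has an unconditional decomposition into two-dimensional blocks, and that is what their block-sequence analysis actually uses. Second, Kalton--Peck prove that $Z_2$ contains \emph{no} complemented copy of $\ell_2$, so the claim that flat block sequences ``span complemented copies of $\ell_2(\mathbb C)$'' is false; fortunately the statement only asserts plain $\ell_2(\mathbb C)$-saturation, so that claim is not needed.
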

\begin{proof} The first assertion is a general fact: given a complex structure $u$ on some real space $X$, $X^u$ is complemented in $X\oplus_{\mathbb C} X$: indeed, $X \oplus_{\mathbb C} X = \{(x, ux), x \in X\} \oplus \{(x, -ux), x \in X\}$,
the first summand being $\mathbb C$-linearly isomorphic to $X^{-u}$, and the second to $X^{u}$. The second assertion follows from results of Kalton-Peck \cite{kaltpeck}: complex structures on $Z_2$ inherit saturation properties of the complex $Z_2(\mathbb C)$.
\end{proof}

\begin{prop}\label{uniq} Let $u$ be a complex structure on $Z_2$. If $Z_2^u$ is isomorphic to its square then it is isomorphic to $Z_2(\mathbb C)$.\end{prop}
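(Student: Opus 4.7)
The strategy is to apply Pełczyński's decomposition method to the pair $(X,Y)=(Z_2^u,\,Z_2(\mathbb{C}))$. By the preceding corollary and the computation preceding it, $Y\simeq X\oplus_\mathbb{C} Z_2^{-u}$, so $X$ is complemented in $Y$. Applying complex conjugation to the hypothesis gives the parallel statement $Z_2^{-u}\simeq (Z_2^{-u})^2$, and hence
$$Y\;\simeq\; X\oplus Z_2^{-u}\;\simeq\; X^2\oplus (Z_2^{-u})^2\;\simeq\;(X\oplus Z_2^{-u})^2\;\simeq\; Y^2.$$
More strongly, using the Kalton--Peck fact that $Z_2\simeq \ell_2(Z_2)$ (in its complex version), one obtains $Y\simeq \ell_2(Y)$, which is precisely the infinite--sum condition needed to run Pełczyński's machine.

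With this in place, Pełczyński's decomposition method yields $X\simeq Y$ as soon as $Y$ is also complemented in $X$. Concretely, the scheme is: from $Y\simeq\ell_2(Y)$ and $X$ complemented in $Y$ one deduces the absorption $Y\simeq X\oplus Y$; from $X\simeq X^2$ and $Y$ complemented in $X$ one deduces the symmetric absorption $X\simeq X\oplus Y$; and mutual absorption forces $X\simeq Y$. Thus the whole proof reduces to one step.

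The main obstacle is therefore to prove the complementation of $Y=Z_2(\mathbb{C})$ inside $X=Z_2^u$. Because $Y\simeq X\oplus Z_2^{-u}$ and $X\simeq X^2$, this is equivalent to exhibiting $Z_2^{-u}$ as a complemented subspace of $Z_2^u$. The plan here is to use the hypothesis in a structural way: a $\mathbb{C}$-linear isomorphism $\Psi:Z_2^u\to (Z_2^u)^2$ is, at the real level, an isomorphism $Z_2\to Z_2\oplus Z_2$ intertwining $u$ with the diagonal structure $u\oplus u$; combining this with the external complex structure $U(x,y)=(-y,x)$ on $Z_2\oplus Z_2$ (whose associated complex space is $Z_2(\mathbb{C})\simeq Z_2^u\oplus Z_2^{-u}$, as recorded after Proposition \ref{complexifZ2}) produces, on the common real underlying space $Z_2$, a second complex structure commuting with $u$. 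Its $\pm i$ eigenspaces should then split $Z_2^u$ $\mathbb{C}$-linearly into a copy of $Z_2^u$ and a copy of $Z_2^{-u}$, thereby giving the desired complementation.

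I expect the delicate point to be checking that this eigenspace decomposition really produces a copy of $Z_2^{-u}$ (and not just another copy of $Z_2^u$): the computation must record the signs carefully, in the spirit of the identification $(X\oplus_{\mathbb{C}}X)\simeq X^u\oplus X^{-u}$ used in the corollary. Once this is verified, Pełczyński's decomposition closes the argument and yields $Z_2^u\simeq Z_2(\mathbb{C})$.
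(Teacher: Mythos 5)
Your skeleton is the paper's: Pe\l czy\'nski's decomposition method applied to $X=Z_2^u$ and $Y=Z_2(\mathbb C)$, using that $X$ is complemented in $Y$ (via $Y\simeq X\oplus_{\mathbb C}Z_2^{-u}$), that $Y\simeq Y^2$, and that $X\simeq X^2$ by hypothesis. Two side remarks: the ``squares'' version of the scheme already closes once each space is complemented in the other, so you do not need $Y\simeq\ell_2(Y)$ --- and the claim $Z_2\simeq\ell_2(Z_2)$ is not a Kalton--Peck fact you can safely invoke; also, your derivation of $Y\simeq Y^2$ from the hypothesis (conjugating $\Psi$ to get $Z_2^{-u}\simeq(Z_2^{-u})^2$) is correct, though the paper simply quotes that $Z_2(\mathbb C)$ is isomorphic to its square.

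The genuine gap is exactly the step you flag as delicate: the complementation of $Y$ in $X$. The paper obtains it immediately from the \emph{second} assertion of the preceding corollary --- $Z_2^u$ is $Z_2(\mathbb C)$-complementably saturated, a consequence of Kalton--Peck's structure theory --- so in particular $Z_2(\mathbb C)$ embeds complementably into $Z_2^u$. Your substitute construction does not deliver this. Transporting $U$ through $\Psi$ gives $V=\Psi^{-1}U\Psi$, a complex structure on $Z_2$ commuting with $u$ with $(Z_2,V)\simeq Z_2(\mathbb C)$, and the eigenspace decomposition is $Z_2=G_+\oplus G_-$ with $G_{\pm}=\Psi^{-1}\bigl(\{(x,\pm ux):x\in Z_2\}\bigr)$, on one of which $V=u$ and on the other $V=-u$. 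But \emph{both} $(G_+,u)$ and $(G_-,u)$ are $\mathbb C$-linearly isomorphic to $Z_2^u$: each graph $\{(x,\pm ux)\}$, endowed with $u\oplus u$, is a copy of $Z_2^u$ via $x\mapsto(x,\pm ux)$, and $\Psi$ is $\mathbb C$-linear for $u$ and $u\oplus u$. The copy of $Z_2^{-u}$ produced by the construction is $(G_+,-u)$, i.e.\ the same real subspace carrying the \emph{opposite} structure; that is not a $\mathbb C$-linearly complemented subspace of $Z_2^u$. So the eigenspace splitting merely reproduces $Z_2^u\simeq(Z_2^u)^2$ and $Z_2(\mathbb C)\simeq Z_2^u\oplus Z_2^{-u}$, and gives no complemented copy of $Z_2^{-u}$ (equivalently of $Z_2(\mathbb C)$) inside $Z_2^u$. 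Replace this step by the corollary's saturation statement and the rest of your argument goes through.
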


\begin{proof} The space $Z_2(\mathbb C)$  is isomorphic to its square. Use the Corollary above and Pe\l czy\'nski's decomposition method.\end{proof}

\section{Compatible complex structures on the hyperplanes of $Z_2$}

As we mentioned in the introduction, our main motivation is the old open problem of whether $Z_2$ is isomorphic to its hyperplanes.
Since $Z_2$ admits complex structures, showing  that hyperplanes of $Z_2$ do not admit complex structures would show that
$Z_2$ is not isomorphic to its hyperplanes. What we are going to show is that hyperplanes of $Z_2$ do not admit complex structures compatible, in a sense to be explained, with its twisted Hilbert structure. More precisely, we already know the existence of complex structures on $\ell_2$ that do not extend to complex structures (or even operators) on $Z_2$, and we shall show in this section that no complex structure on $\ell_2$ can be extended to a complex structure {\em on a hyperplane of $Z_2$}. An essential ingredient to prove this is \cite[Proposition 8]{feregale}, which may be stated as:

\begin{prop}\label{FG} Let $u, h$ be  complex structures  on, respectively, an infinite dimensional real Banach space $X$ and some hyperplane $H$ of $X$.
Then the operator $u_{|H}- h$ is not strictly singular.
\end{prop}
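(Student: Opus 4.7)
The plan is to argue by contradiction: assuming $S := u_{|H} - h$ is strictly singular, I will exhibit a $\mathbb{C}$-linear Fredholm operator between the complex Banach spaces $(H, h)$ and $(X, u)$ whose real Fredholm index is odd, contradicting the standard parity relation between real and complex Fredholm indices of $\mathbb{C}$-linear operators.

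The crucial algebraic observation is the following: set
\[
T := \tfrac{1}{2}(u_{|H} + h) = h + \tfrac{1}{2} S \;:\; H \to X.
\]
For every $x \in H$, a direct computation using only $h^2 = -\mathrm{id}_H$ and $u^2 = -\mathrm{id}_X$ shows that $T(hx) = u(Tx)$: both sides equal $(u(hx) - x)/2$. Since $h$ acts as multiplication by $i$ on $(H, h)$ and $u$ as multiplication by $i$ on $(X, u)$, this intertwining identity is precisely the statement that $T\colon (H, h) \to (X, u)$ is $\mathbb{C}$-linear.

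Next, I will observe that the natural operator $h\colon H \to X$ (that is, $h$ post-composed with the inclusion $H \hookrightarrow X$) is Fredholm as a real-linear operator with index $-1$: it is injective because $h$ is an automorphism of $H$, and its range is the hyperplane $H$ of $X$. Since $S/2$ is strictly singular by assumption, Kato's perturbation theorem implies that $T = h + S/2$ is real-linear Fredholm with the same index $-1$.

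The contradiction is then immediate from the parity fact: for any $\mathbb{C}$-linear Fredholm operator between complex Banach spaces, both the kernel and the cokernel are $\mathbb{C}$-subspaces, hence of even real dimension, so the real Fredholm index equals twice the complex Fredholm index and is in particular even. But $T$ is $\mathbb{C}$-linear Fredholm with real index $-1$, an odd integer. This contradiction shows that $S = u_{|H} - h$ cannot be strictly singular. The main obstacle I anticipate is spotting the correct combination $(u_{|H} + h)/2$ as the $\mathbb{C}$-linear piece of $u_{|H}$ with respect to the two complex structures; once that combination is identified, the contradiction via Kato's theorem and parity of the Fredholm index is very short.
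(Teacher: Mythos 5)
Your proof is correct. The paper does not actually prove this proposition itself — it quotes it as Proposition 8 of Ferenczi--Galego \cite{feregale} and only offers the heuristic that $u_{|H}=h$ would induce a complex structure on the one-dimensional quotient $X/H$; your argument (the averaging trick identifying $\tfrac{1}{2}(u_{|H}+h)$ as a $\mathbb{C}$-linear map $(H,h)\to(X,u)$ that is a strictly singular perturbation of the index $-1$ operator $h\colon H\to X$, followed by Kato's stability of the Fredholm index and the evenness of the real index of a $\mathbb{C}$-linear Fredholm operator) is exactly the rigorous version of that heuristic and is essentially the argument of the cited source.
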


This proposition may be understood as follows. It is clear that $u_{|H}$ may not be {\em equal} to $h$. For then  the induced quotient operator $\tilde{u} (x+H)= ux+H$ would be a complex structure on $X/H$, which has dimension $1$. The result above extends this observation to strictly singular perturbations.

The following result appears proved in \cite{cck}:

\begin{lemma}\label{3strict} Assume one has a commutative diagram:

$$\begin{CD}
0@>>> A @>>>  B @>q>> C @>>> 0\\
 &&@VtVV @VVTV @| \\
0@>>> D@>>>  E@>>> C@>>>0
\end{CD}$$
If both $q$ and $t$ are strictly singular then $T$ is strictly
singular.
\end{lemma}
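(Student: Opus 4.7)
I would argue by contradiction. Suppose $T$ is not strictly singular; then there exists an infinite dimensional closed subspace $M\subseteq B$ on which $T$ is an isomorphism onto its image. The strategy is to locate inside $M$ a subspace on which $T$ essentially factors through $t$, producing the contradiction.

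The first step is to push $M$ close to the copy of $A$ inside $B$ using strict singularity of $q$. I would invoke the standard characterization: if $q: B \to C$ is strictly singular, then for every infinite dimensional subspace of $B$ and every $\varepsilon>0$ one can pass to a further infinite dimensional subspace on which $\|q\|<\varepsilon$. Combined with a gliding hump / small perturbation argument of Kato type (using that $A$ is the kernel of $q$), this produces an infinite dimensional subspace $M'\subseteq M$ and an infinite dimensional subspace $N$ of (the embedded copy of) $A$ in $B$ such that the inclusion of $M'$ in $B$ and the natural identification of $M'$ with $N$ differ by a map of norm at most $\varepsilon$. In other words, $M'$ lies arbitrarily close to a subspace of $A$.

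The second step is to push this through $T$. By commutativity of the diagram, $T$ restricted to the embedded copy of $A$ in $B$ agrees with the embedded image of $t:A\to D$ in $E$; since the bottom inclusion $D\hookrightarrow E$ is an isomorphism onto its image, $T|_{N}$ is an isomorphism if and only if $t|_{N}$ is. But $t$ is strictly singular, so one may select an infinite dimensional subspace of $N$ on which $t$ (and hence $T$, via the embedding) fails to be an isomorphism. Pulling this back through the small perturbation from the first step, we obtain an infinite dimensional subspace of $M'\subseteq M$ on which $T$ fails to be an isomorphism, contradicting our choice of $M$.

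The technical heart of the argument, and the main obstacle, is the first step: obtaining a subspace $M'$ of $M$ that is genuinely close to a subspace of the kernel $A$, rather than merely a subspace on which $\|q\|$ is small. This is exactly where the standard perturbation lemma for exact sequences is needed; once $M'$ has been placed close enough to $A$, the rest is a diagram chase combined with the strict singularity of $t$.
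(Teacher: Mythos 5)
The paper does not actually prove this lemma; it is quoted from \cite{cck}, so there is no in-paper argument to compare against. Your proposal is the standard proof and it is correct: strict singularity of $q$, together with the fact that the induced map $B/A\to C$ is an isomorphism, lets you choose (by a Mazur--Kato gliding-hump argument) a normalized basic sequence in $M$ whose terms are summably close to vectors of $A$, the principle of small perturbations then identifies an infinite dimensional $M''\subseteq M$ with a subspace $N\subseteq A$ up to an operator of norm $\leq\varepsilon$, and commutativity gives $T|_A=j_D t$ with $j_D:D\to E$ an into isomorphism, so strict singularity of $t$ kills the lower bound for $T$ on $M$. The one point to make explicit in a full write-up is the quantitative bookkeeping at the end: fix $c>0$ with $\|Tx\|\geq c\|x\|$ on $M$ from the start, take $\varepsilon<c/(2\|T\|)$ in the perturbation step, and in the final step use that $t$ is not bounded below on $N$ to produce a single unit vector of $M''$ on which $\|T\|<c$; ``fails to be an isomorphism'' is the right ingredient, but it must be run against the fixed constant $c$ rather than in the abstract.
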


Consequently, if one has an exact sequence $0\to Y\to X\stackrel{q}\to Z\to 0$ with strictly singular quotient map $q$ and an operator $t:Y\to Y'$ admitting two extensions $T_i: X\to X'$, ($i=1,2$), so that one has commutative diagrams
$$\begin{CD}
0@>>> Y @>>>  X @>q>> Z @>>> 0\\
 &&@VtVV @VVT_iV @| \\
0@>>> Y'@>>>  X'@>>> Z@>>>0
\end{CD}$$
then necessarily $T_1-T_2$ is strictly singular simply because $(T_2 - T_1)|_Y = 0$ and thus one has the commutative diagram
$$\begin{CD}
0@>>> Y @>>>  X @>q>> Z @>>> 0\\
 &&@V0VV @VV{T_1-T_2}V @| \\
0@>>> Y'@>>>  X'@>>> Z@>>>0.
\end{CD}$$

From this we get:

\begin{prop}\label{hyper} Let  $0\to Y\to X\stackrel{q}\to Z\to 0$ be an exact sequence of real Banach spaces with strictly singular quotient map. Let $u$ be a complex structure on $Y$ and let $H$ be a hyperplane of $X$ containing $Y$. Then $u$ does not extend simultaneously to a complex structure on $X$ and to a complex structure on $H$.
\end{prop}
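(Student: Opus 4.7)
The plan is a proof by contradiction using Proposition \ref{FG}. Suppose $u$ extends simultaneously to a complex structure $U$ on $X$ and to a complex structure $V$ on $H$. The key object is the operator $T := U|_H - V : H \to X$. Since both $U$ and $V$ restrict to $u$ on $Y$, one has $T|_Y = u - u = 0$, so $T$ vanishes on $Y$.

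Because $T$ kills $Y$, it factors through the quotient: there exists a bounded linear map $w : H/Y \to X$ with $T = w \circ q|_H$, where $q|_H : H \to H/Y$ is the restriction of the quotient map (observing that, since $H$ has codimension $1$ in $X$ and contains $Y$, $H/Y$ is a hyperplane of $Z$). The boundedness of $w$ is standard: for every $y \in Y$, $\|T(h)\| = \|T(h+y)\| \leq \|T\|\,\|h+y\|$, which upon taking the infimum over $y \in Y$ gives $\|w(h+Y)\| \leq \|T\|\,\|h+Y\|$. Now $q : X \to Z$ is strictly singular by hypothesis; restricting to the infinite-dimensional subspace $H$ and corestricting to $H/Y \subseteq Z$ preserves strict singularity, so $q|_H : H \to H/Y$ is strictly singular. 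Since strictly singular operators form an operator ideal, the composition $T = w \circ q|_H$ is strictly singular.

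On the other hand, applying Proposition \ref{FG} to the complex structure $U$ on $X$ and to the complex structure $V$ on its hyperplane $H$ yields that $U|_H - V$ is \emph{not} strictly singular, which contradicts the previous paragraph and completes the argument. I do not expect serious obstacles here; the only technical checks are that the factorising map $w : H/Y \to X$ is bounded and that strict singularity of $q|_H$ is unaffected by corestricting to $H/Y$, both of which are routine. As an alternative packaging of the strict-singularity half, one can arrange the data as a commutative diagram of exact sequences with $0$ as the left vertical and invoke Lemma \ref{3strict} (or its direct perturbation proof) to obtain strict singularity of $T$ from that of $q|_H$.
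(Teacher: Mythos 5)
Your proposal is correct and follows essentially the same route as the paper: both arguments observe that $U|_H - V$ vanishes on $Y$, deduce strict singularity from the strict singularity of the quotient map (you by factoring through $H/Y$, the paper by the equivalent Lemma \ref{3strict} with zero left vertical arrow), and then contradict Proposition \ref{FG}.
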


\begin{proof}
Let $\gamma$ be a complex structure on $X$ such that $\gamma_{|_Y} = u$. Assume that $\gamma^H$ is a complex structure on $H$ extending also $u$, then it follows from the previous argument that $\gamma_{|_{H}} - \gamma^H$ is strictly singular; which contradicts Proposition \ref{FG}.
\end{proof}

And therefore, since the Kalton-Peck map is singular:

\begin{cor}\label{hyper2} Let $u$ be any complex structure on $\ell_2$ and let $H$ be a hyperplane of $Z_2$ containing the canonical copy of $\ell_2$ into $Z_2$. If $u$ extends to a complex structure on $Z_2$ then it cannot extend to a complex structure on $H$.
\end{cor}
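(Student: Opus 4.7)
The plan is to deduce this as an immediate consequence of Proposition \ref{hyper}, once we verify that its hypotheses are satisfied by the canonical Kalton--Peck exact sequence.

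First I would fix the exact sequence of real Banach spaces
$$\begin{CD} 0@>>> \ell_2 @>>> Z_2 @>q>> \ell_2 @>>> 0 \end{CD}$$
induced by the Kalton--Peck centralizer $\mathscr K$, with $\ell_2$ sitting inside $Z_2$ as the canonical copy mentioned in the statement. The key observation is that this quotient map $q$ is strictly singular. This is precisely the singularity of the Kalton--Peck centralizer on $\ell_2$, already recorded in the excerpt (``The Kalton--Peck centralizer on $\ell_2$ is singular \cite{kaltpeck}'') together with the characterization that a quasi-linear map is singular exactly when its associated exact sequence has strictly singular quotient map.

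Next I would apply Proposition \ref{hyper} with $Y = \ell_2$, $X = Z_2$, $Z = \ell_2$, and $H$ the given hyperplane containing the canonical copy of $\ell_2$. The hypotheses are all in place: the exact sequence has strictly singular quotient map by the previous paragraph, $u$ is a complex structure on $Y = \ell_2$, and $H$ is a hyperplane of $X = Z_2$ containing $Y$. The conclusion of Proposition \ref{hyper} is that $u$ cannot extend simultaneously to a complex structure on $Z_2$ and to a complex structure on $H$. Since by assumption $u$ does extend to a complex structure on $Z_2$, it follows that $u$ cannot extend to a complex structure on $H$, which is exactly the statement of the corollary.

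There is essentially no obstacle in this argument: the corollary is a direct specialization of Proposition \ref{hyper} to the Kalton--Peck setting. All the work has already been done in establishing Proposition \ref{hyper} (via Lemma \ref{3strict} and the Ferenczi--Galego Proposition \ref{FG}), and in recording the singularity of $\mathscr K$ on $\ell_2$. The only thing worth double-checking is that the canonical copy of $\ell_2$ in $Z_2$ is indeed the ``$Y$'' of the relevant exact sequence, which is built into the definition of $Z_2 = \ell_2 \oplus_{\mathscr K} \ell_2$.
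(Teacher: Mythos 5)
Your proposal is correct and is exactly the paper's argument: the corollary is derived by applying Proposition \ref{hyper} to the Kalton--Peck sequence $0\to\ell_2\to Z_2\to\ell_2\to 0$, whose quotient map is strictly singular because $\mathscr K$ is a singular centralizer. Nothing further is needed.
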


We shall show that the conclusion in the corollary always holds. We shall actually prove this in a slightly more general setting by isolating the notion of complex structure on an hyperplane of a twisted sum {\em compatible} with the twisted sum.

\adef Let $0 \to Y\to  X\to Z\to 0$ be an exact sequence with associated quasi-linear map $\Omega$. We will say that a complex structure $u$ on an hyperplane $H$ of $X$ is compatible with $\Omega$, or simply compatible, if  $u(H\cap Y)\subset Y$. \zdef

 In other words, $u$ is compatible if $u$ restricts to a complex structure on $H \cap Y$.
Of course if the hyperplane $H$ contains  $Y$ then this is simply restricting to  a complex structure on  $Y$. Our purpose  is now to show:

\begin{teor}\label{comphyp}
No complex structure on an hyperplane of $Z_2$ is compatible with Kalton-Peck map  $\mathscr K$.
\end{teor}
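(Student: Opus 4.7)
The plan is to argue by contradiction. Suppose $u$ is a complex structure on a hyperplane $H$ of $Z_2$ satisfying $u(H\cap Y)\subseteq Y$. Since $u^2=-\mathrm{id}_H$ and $u$ maps $H$ to $H$, the image $u(H\cap Y)$ actually lies in $H\cap Y$, so that $u_0:=u|_{H\cap Y}$ is a complex structure on $H\cap Y$. The analysis splits in two cases according to whether $Y\subseteq H$ or not: in the first case $H\cap Y=Y$ and $u_0$ is a complex structure on $Y\cong\ell_2$; in the second, $H\cap Y$ is a hyperplane of $Y$, still isomorphic to $\ell_2$.

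The goal is to apply Proposition \ref{hyper} with the role of $Y$ played by $H\cap Y$ and that of $X$ by $Z_2$; this needs three ingredients. First, a singular exact sequence $0\to H\cap Y\to Z_2\to Q\to 0$: in Case $1$ one takes the Kalton-Peck sequence itself, whose quotient map is strictly singular since $\mathscr{K}$ is singular by Kalton-Peck; in Case $2$ one uses $0\to H\cap Y\to Z_2\to Z_2/(H\cap Y)\to 0$, whose quotient map is the sum of a rank-one map and the strictly singular $q:Z_2\to\ell_2$, hence is itself strictly singular. Second, an extension of $u_0$ to a complex structure on the hyperplane $H\supseteq H\cap Y$: this is given for free by $u$ itself. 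Third, and crucially, a simultaneous extension of $u_0$ to a complex structure on all of $Z_2$.

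For this last step I would apply Corollary \ref{bcommu}. Since $u$ is compatible, Lemma \ref{crit} furnishes a complex structure $\bar u$ on the quotient $H/(H\cap Y)$ such that $(u_0,\bar u)$ is compatible with the restricted centralizer. By Kalton's uniqueness of complex structure on $\ell_2$ one can write $u_0=\phi\omega\phi^{-1}$ for an automorphism $\phi$ of $\ell_2$; combined with the fact from Proposition \ref{comint} that $(\omega,\omega)$ is a complex structure on $Z_2$ whose commutator with $\mathscr{K}$ is bounded, this information should yield a complex structure $\sigma$ on $Q$ with $[u_0,\mathscr{K},\sigma]$ bounded plus linear, and then Corollary \ref{bcommu} produces the desired complex structure $\gamma$ on $Z_2$ extending $u_0$. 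Proposition \ref{hyper} then gives the contradiction.

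The hard part will be this construction of $\sigma$. Theorem \ref{extension}$(2)$ tells us that generic complex structures on $\ell_2$ do not extend to $Z_2$, so no abstract extension principle is available; the argument must exploit the specific datum that $u_0$ already extends to the codimension-one subspace $H$. A parity argument shows that $\bar u$ on the hyperplane $H/(H\cap Y)$ of $Q$ cannot be directly extended to a linear operator squaring to $-\mathrm{id}$ on $Q$, so the construction must absorb this obstruction into the bounded-plus-linear slack permitted in the definition of compatibility, and the careful verification that the resulting $\sigma$ is a genuine complex structure rather than merely a bounded operator extension is the technical heart of the proof.
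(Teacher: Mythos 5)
There is a genuine gap, and it sits exactly where you yourself locate ``the hard part''. Your plan is to reduce the theorem to Proposition \ref{hyper}, which requires producing a \emph{simultaneous} extension of $u_0$ to a complex structure on the hyperplane $H$ (which you have for free) \emph{and} to a complex structure on all of $Z_2$. You never construct the latter, and in general it cannot be constructed: Theorem \ref{extension}(2) shows that complex structures on $\ell_2$ need not extend even to bounded operators on $Z_2$, and the paper explicitly frames Corollary \ref{hyper2} (which is precisely the reduction you are attempting) as covering only the special case ``if $u$ extends to a complex structure on $Z_2$'', with the theorem being the assertion that the conclusion holds \emph{without} that hypothesis. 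Your hope that the datum ``$u_0$ extends to the codimension-one subspace $H$'' can be upgraded to an extension to $Z_2$ via Corollary \ref{bcommu} and the uniqueness of complex structure on $\ell_2$ is not substantiated; writing $u_0=\phi\omega\phi^{-1}$ gives no control on $[\phi,\mathscr K]$, so no bounded-plus-linear estimate on $[u_0,\mathscr K,\sigma]$ follows. Indeed the parity obstruction you mention is real and is not obviously absorbable into the equivalence-class slack.

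The paper's proof avoids this entirely and runs on different machinery, none of which appears in your proposal. From compatibility one extracts a pair of \emph{operators} $(T,U)$ on $\ell_2$ (extending $u_0$ and the induced quotient structure arbitrarily, as operators, across the one-dimensional defect) compatible with $\mathscr K$; Corollary \ref{compactZ2} --- which rests on Proposition \ref{strictcommutator}, the fact that the commutator $[\mathscr K,T]$ is trivial on a block subspace of every block subspace, combined with the singularity of $\mathscr K$ via Proposition \ref{compact} --- forces $T-U$ to be compact. On the other hand Proposition \ref{FG} applied to the complex structures $u_0$ and $\overline u$, one living on $\ell_2$ and the other on a hyperplane of $\ell_2$, forces $T-U$ to be non-strictly-singular. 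That is the contradiction. If you want to salvage your approach you would have to either prove the global extension exists (which the structure of the paper strongly suggests is false in general) or switch to the commutator/compactness argument.
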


We need some extra work.  Given a quasi-linear map $\Omega : Z \lop Y$ and a closed subspace $H=[u_n] \subset Z$ generated by a basic sequence, we define the following quasi-linear map, associated to $H$, on finite combinations $x = \sum \lambda_n u_n$,
$$\Omega_H(\sum \lambda_n u_n)= \Omega(\sum \lambda_n u_n) - \sum \lambda_n \Omega u_n.$$ Since the difference $\Omega_{|H}-\Omega_H$ is linear, we have that the quasi-linear maps $\Omega_{|H}$ and $\Omega_H$ are equivalent.\\


Assume that $T$ is an isomorphism on $Z$, then $(Tu_n)$ is again a basic sequence and $TH = [Tu_n]$, so the meaning of $\Omega_{TH}$ is clear. One thus has $[T,\Omega]_{|H} \equiv T\Omega_H - \Omega_{TH}T_{|H}$.

The following proposition will be useful.

\begin{prop}\label{strictcommutator} For every operator $T: \ell_2\to \ell_2$, and for every block subspace $W$ of $X$, the commutator $[\mathscr K, T]$ is trivial on some block subspace of $W$. In particular, $[\mathscr K,T]$ is not singular.
\end{prop}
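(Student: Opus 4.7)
The plan is to produce, for any operator $T:\ell_2 \to \ell_2$ and any block subspace $W$, a further block subspace $W'=[w_n]\subset W$ on which $T$ decomposes as $\lambda S + N$, where $S$ sends one normalized disjointly-supported sequence $(w_n)$ to another such sequence $(v_n)$, and $N$ is nuclear. On such a subspace Lemma \ref{blocks} applied to $N$ shows that both $\mathscr K N$ and $N\mathscr K$ are trivial (the two halves of Lemma \ref{blocks} requiring respectively that $\mathscr K$ is defined on the range of $N$ and that $\ell_2$ is $B$-convex), while $[\mathscr K,\lambda S]$ is trivial by a direct centralizer-style computation using the $\mathbb R$-homogeneity of $\mathscr K$ and the explicit decomposition of $\mathscr K$ on spans of disjointly-supported normalized vectors.

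\textbf{Step 1 (weakly-null reduction).} Pick a normalized block basis $(u_n)$ of $W$. After extracting a subsequence we may assume $Tu_n \to y$ weakly. If $y\neq 0$, replace $(u_n)$ by the normalized block basis $w_n := (u_{2n}-u_{2n-1})/\sqrt 2$ of $W$, for which $Tw_n \to 0$ weakly; otherwise set $w_n=u_n$.

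\textbf{Step 2 (gliding hump).} Apply Bessaga--Pe\l czy\'nski gliding hump to the bounded weakly-null sequence $(Tw_n)$: after passing to a subsequence there are disjointly supported $(\widetilde v_n)$ in $\ell_2$ with $\|Tw_n - \widetilde v_n\|\leq 4^{-n}$. If $\|\widetilde v_n\|\to 0$ then $T|_{[w_n]}$ is nuclear and Lemma \ref{blocks} settles the matter directly. Otherwise, normalize $\widetilde v_n = \lambda_n v_n$ with $\|v_n\|=1$ and $|\lambda_n|$ bounded below, and pass to a further subsequence with $\lambda_n \to \lambda \neq 0$ and $\sum |\lambda_n - \lambda| < \infty$.

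\textbf{Step 3 (main computation).} Set $r_n := Tw_n - \lambda v_n$, so that $\sum \|r_n\| < \infty$ and the map $N:[w_n]\to \ell_2$ defined by $N(\sum a_n w_n) = \sum a_n r_n$ is nuclear (the biorthogonal functionals to $(w_n)$ being uniformly bounded). For $x=\sum a_n w_n$ one has $Tx = \lambda \sum a_n v_n + Nx$. For any disjointly-supported normalized sequence $(z_n)$ in $\ell_2$, a term-by-term expansion together with $\|\sum a_n z_n\|_2 = \|a\|_2$ gives
\[\mathscr K\!\Bigl(\sum a_n z_n\Bigr) = \sum_n \log\!\frac{|a_n|}{\|a\|_2}\, a_n z_n \;+\; \sum_n a_n \mathscr K(z_n),\]
with the second summand linear in $x$. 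Applying this identity to $(v_n)$ and to $(w_n)$, together with the homogeneity $\mathscr K(\lambda y) = \lambda \mathscr K(y)$, the quasi-linear estimate for $\mathscr K(u+v)$ with $u = \lambda \sum a_n v_n$ and $v = Nx$, and Lemma \ref{blocks} for the triviality of $\mathscr K\circ N$, the leading terms $\lambda\sum\log(|a_n|/\|a\|_2)\,a_n v_n$ cancel between $\mathscr K(Tx)$ and $T\mathscr K(x)$. The residual nonlinear contribution reduces to the single term $\sum_n \log(|a_n|/\|a\|_2)\,a_n r_n$, which is bounded in $\|x\|$ via the entropy inequality $|t\log t|\leq 1/e$ for $t\in[0,1]$:
\[\Bigl\|\sum_n \log\tfrac{|a_n|}{\|a\|_2}\,a_n r_n\Bigr\| \;\leq\; \frac{\|a\|_2}{e}\sum_n \|r_n\|.\]
Everything else is either linear in $x$ or bounded in $\|x\|$, so $[\mathscr K, T]$ is trivial on $[w_n]$.

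The principal obstacle is the careful bookkeeping of linear and bounded errors in Step 3: the key trick is the entropy bound $|t\log t|\leq 1/e$, which converts summability of $(\|r_n\|)$ into the required $\|x\|$-bound, combined with the identity for $\mathscr K$ on normalized disjointly-supported spans, which pairs the scalar $\lambda$ on both sides so that the leading logarithmic terms cancel \emph{exactly}.
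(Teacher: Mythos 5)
Your argument is correct, and it reaches the conclusion by a genuinely different route than the paper. The paper first treats the case where $T$ is an isomorphism on a block subspace $W=[u_n]$: after a nuclear perturbation making $(Tu_n)$ a block basis, it invokes the fact (from \cite[Lemma 3]{ccs}) that $\mathscr K_W$ and $\mathscr K_{TW}$ are linear perturbations of the Kalton--Peck maps of $W$ and $TW$, so that $[\mathscr K,T]_{|W}(x)$ reduces, modulo linear maps, to $T\bigl(\sum_n \lambda_n \log(\|x\|/\|Tx\|)u_n\bigr)$, which is bounded by $\|T\|\log c\,\|x\|$ with $c$ the isomorphism constant; the remaining case ($T$ strictly singular on $W$) is then dispatched by the trick $[\mathscr K,T]\equiv -[\mathscr K, I-T]$, since otherwise both $T$ and $I-T$ would be strictly singular on $W$. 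You instead set up a clean dichotomy on a block subspace --- either $T$ is nuclear there, or $Tw_n=\lambda v_n+r_n$ with a single scalar $\lambda$ and summable errors --- and the single $\lambda$ forces the exact cancellation of the logarithmic terms, with the residue controlled by $|t\log t|\le 1/e$. Your version is more self-contained (you rederive the disjoint-support identity for $\mathscr K$ rather than citing it), avoids the $I-T$ trick entirely, and gives an explicit quantitative bound; the paper's is shorter given the cited lemma. One small imprecision in your plan: the second half of Lemma \ref{blocks} does not literally apply to ``$N\mathscr K$'', since your $N$ is defined only on $[w_n]$ while $\mathscr K(x)$ need not lie in $[w_n]$; but your detailed Step 3 never uses this --- the corresponding contribution $\sum_n \log(|a_n|/\|a\|_2)\,a_n r_n$ is estimated directly by the entropy bound --- and the same estimate (the case $\lambda=0$) is what actually handles the side $T\mathscr K$ in your degenerate case, so there is no gap.
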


\begin{proof} Assume that $T$ is an isomorphism on a block subspace $W = [u_n]$. By Lemma \ref{blocks} we may replace $T$ by a nuclear perturbation to assume also that $\{Tu_n\}$ is a block basis (jumping to a subsequence if necessary). It is not hard to check (see also \cite[Lemma 3]{ccs}) that ${\mathscr K}_{W}$ is a linear perturbation of the Kalton-Peck map of $W$; i.e. for $x=\sum \lambda_nu_n$ one has
(identities are up to a linear map):
$${\mathscr K}_W(x) \equiv \sum \lambda_n \log \frac{\|x\|}{|\lambda_n|}u_n$$
and
$${\mathscr K}_{TW}(Tx) \equiv \sum \lambda_n \log \frac{\|Tx\|}{|\lambda_n|}Tu_n.$$
Thus,$$[\mathscr K, T]_{|W}(x) \equiv \sum \lambda_n \log \frac{\|x\|}{|\lambda_n|}Tu_n - \sum \lambda_n \log \frac{\|Tx\|}{|\lambda_n|}Tu_n = T( \sum \lambda_n \log \frac{\|x\|}{\|Tx\|}u_n).$$
Let $c \geq 1$ be such that $c^{-1} \|x\|\leq \|Tx\| \leq c \|x\|$ on $W$; since $[u_n]$ is unconditional
$$\left\| T \left( \sum \lambda_n \log \frac{\|x\|}{\|Tx\|}u_n \right)\right \| \leq \|T\|\left \|\sum \lambda_n \log \frac{\|x\|}{\|Tx\|}u_n\right \| \leq (\|T\|\log c) \|x\|.$$
So the commutator $[\mathscr K, T]$ is trivial on $W$.\\

It follows immediately that if $[\mathscr K,T]$ is singular on some $W$,  then  $T$ is strictly singular on $W$. But since $[\mathscr K, T]\equiv - [\mathscr K, I - T]$,
the previous implication means that both $T$ and $I-T$ are strictly singular on $W$, which is a contradiction.
\end{proof}


\begin{prop}\label{compact} Let $X$ be a space with a basis and $\Omega: X\to X$ be a  singular quasi-linear map, with
the property that for any operator $T$ on $X$ and for every  block subspace $W$ of $X$,  $[T,\Omega]$ is trivial on some block subspace of $W$. Let $T, U: X\to X$ be  bounded linear operators such that $(T,U)$ is compatible with $\Omega$.  Then $T-U$ is strictly singular.
\end{prop}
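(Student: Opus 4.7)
The plan is to argue by contradiction. Suppose $T-U$ is not strictly singular. Since $X$ has a basis, a standard gliding-hump perturbation argument produces a block subspace $W_0$ of $X$ on which $T-U$ is an isomorphism onto its closed, infinite-dimensional image $(T-U)W_0$.

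Next I would exploit the hypothesis on $\Omega$ twice in succession. Applying it to the operator $T$ and the block subspace $W_0$ yields a block subspace $W_1 \subset W_0$ on which $[T,\Omega]$ is trivial; applying it again to $U$ and $W_1$ yields a block subspace $W_2 \subset W_1$ on which $[U,\Omega]$ is trivial. Hence on $W_2$ both $T\Omega \equiv \Omega T$ and $U\Omega \equiv \Omega U$. Combining with the compatibility hypothesis $T\Omega \equiv \Omega U$ gives the chain
\[
\Omega T \;\equiv\; T\Omega \;\equiv\; \Omega U \qquad \text{on } W_2,
\]
so $\Omega T - \Omega U$ is trivial on $W_2$.

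The next step is to pass from $\Omega T - \Omega U$ to $\Omega(T-U)$. Because $\Omega$ is quasi-linear and (odd) homogeneous, for every $x$ the vector $\Omega((T-U)x) - \Omega(Tx) + \Omega(Ux)$ has norm bounded by $C(\|Tx\|+\|Ux\|)$, and therefore $\Omega(T-U) - (\Omega T - \Omega U)$ is a bounded homogeneous map $X \to X$. Consequently $\Omega(T-U)$ is trivial on $W_2$, i.e.\ it coincides on $W_2$ with a sum $B+L$ of a bounded homogeneous map and a linear map.

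Finally, pre-composing with the bounded inverse $\phi \colon (T-U)W_2 \to W_2$ of the isomorphism $(T-U)|_{W_2}$, we get $\Omega|_{(T-U)W_2} = (B+L)\circ \phi$, which is again bounded plus linear, so $\Omega$ is trivial on the infinite-dimensional closed subspace $(T-U)W_2$. This contradicts the assumption that $\Omega$ is singular, and forces $T-U$ to be strictly singular. The main technical point to get right is the gliding-hump reduction to a \emph{block} subspace $W_0$ (so that the hypothesis on $\Omega$ can be invoked), and the careful bookkeeping with the quasi-linearity constants needed to pass from triviality of $\Omega T - \Omega U$ to that of $\Omega(T-U)$.
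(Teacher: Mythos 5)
Your proposal is correct and follows essentially the same route as the paper's proof: reduce to a block subspace on which $T-U$ is an isomorphism, use the commutator hypothesis together with compatibility and quasi-linearity to conclude that $\Omega(T-U)$ is trivial there, and derive a contradiction with singularity of $\Omega$ on the image $(T-U)W$. The only (harmless) redundancy is your second application of the hypothesis to obtain triviality of $[U,\Omega]$, which your own chain of equivalences never actually uses.
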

\begin{proof} Assume that the restriction of $T-U$ to some block subspace $W$ is an isomorphism into, then passing to a subspace we may assume that $\Omega T - T \Omega$ is trivial on $W$.
 Since
$\Omega U - T\Omega$ is trivial by hypothesis and $\Omega U - \Omega T - \Omega (U - T)$ is trivial, we obtain that $\Omega(T-U)$ is trivial on $W$. This means
that $\Omega$ is trivial on $(T-U)W$, which is not possible because $\Omega$ is singular. So $T-U$ must be strictly singular.
\end{proof}

\begin{cor}\label{compactZ2}  If $T, U$ are bounded linear operators on $\ell_2$ such that
$(T,U)$  is compatible with $\mathscr K$, then  $T-U$ is compact.
\end{cor}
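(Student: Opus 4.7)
The plan is to invoke Proposition \ref{compact} and then upgrade strict singularity to compactness using that we are working on a Hilbert space.

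First I would verify that all hypotheses of Proposition \ref{compact} hold in this setting: take $X=\ell_2$ (which has the unit vector basis) and $\Omega=\mathscr K$. The Kalton--Peck centralizer $\mathscr K$ on $\ell_2$ is singular by the result of Kalton--Peck cited in the discussion before the definition of $\nabla_{[b]}\Omega$. The remaining hypothesis, namely that for every operator $T$ on $\ell_2$ and every block subspace $W$ of $\ell_2$ the commutator $[\mathscr K,T]$ is trivial on some further block subspace of $W$, is exactly the content of Proposition \ref{strictcommutator}. So Proposition \ref{compact} applies and yields that $T-U$ is strictly singular on $\ell_2$.

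Second, I would upgrade strict singularity to compactness using the Hilbertian structure. This is the standard fact that on $\ell_2$ (or any Hilbert space) every strictly singular operator is compact. I would sketch the reason for completeness: if $S=T-U$ were strictly singular but not compact, then one could find a weakly null sequence $(x_n)$ in the unit ball with $\|Sx_n\|\geq \delta>0$; a standard gliding hump applied inside $\ell_2$ produces a subsequence equivalent to an orthonormal block basis on which $S$ is bounded below, contradicting strict singularity of $S$ on that infinite-dimensional subspace.

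Since no step is delicate, I do not expect an obstacle: the whole argument is a two-line deduction, the first line being an application of Proposition \ref{compact} and the second being the equivalence of strict singularity and compactness in Hilbert space. The only thing to be careful about is citing (or briefly justifying) this latter equivalence, as the earlier parts of the paper only use strict/super-strict singularity in the general Banach setting and do not record this Hilbert-space specialization explicitly.
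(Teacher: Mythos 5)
Your proposal is correct and matches the paper's intended deduction exactly: Proposition \ref{strictcommutator} supplies the commutator hypothesis, Proposition \ref{compact} together with the singularity of $\mathscr K$ on $\ell_2$ gives strict singularity of $T-U$, and the standard fact that strictly singular operators on a Hilbert space are compact finishes the argument. No issues.
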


We are ready to obtain:\\

\noindent \textbf{Proof of Theorem \ref{comphyp}} Let $H$ be an hyperplane of $Z_2$ admitting a compatible complex structure $u$. Starting from the representation
$$\begin{CD}
0@>>> \ell_2@>>> Z_2 @>q>> \ell_2 @>>> 0\end{CD}$$
 there are two possibilities: either $H$ contains (the natural copy of) $\ell_2$ or not. If $H$ contains $\ell_2$  then $u_0: \ell_2 \to \ell_2$, defined by $u_0(x)=u(x)$ is also a complex structure. This induces a complex structure $\overline{u}: H/\ell_2 \to H/\ell_2$ yielding a commutative diagram:

$$\begin{CD}
0@>>> \ell_2 @>>> H @>>> H/\ell_2@>>> 0\\
&&@V{u_0}VV @VVuV @VV{\overline u}V\\
0@>>> \ell_2 @>>> H @>>> H/\ell_2@>>> 0.
\end{CD}$$

Observe that if $j: H/\ell_2 \to \ell_2$ is the canonical embedding then the quasi-linear map associated to the above exact sequence is $\mathscr K j$; and the commutative diagram above means that the couple $(u_0, \overline u)$ is compatible with $\mathscr K j$. Since $j(H/\ell_2)$ is an hyperplane of $\ell_2$, we can extend $j\overline{u}j^{-1}$ to an operator $\tau: \ell_2\to \ell_2$, and the couple $(u_0, \tau)$ is compatible with $\mathscr K$: the maps $u_0\mathscr K$ and $\mathscr K \tau$ are equivalent since so are their restrictions
$u_0\mathscr K j$ and $\mathscr K \tau j = \mathscr K j \overline{u}$ to a one codimensional subspace.  By Corollary \ref{compactZ2}, $u_0 - \tau$ is compact.

 On the other hand, we have a complex structure $u_0$ on $\ell_2$ and another $j \overline u j^{-1}$ on an hyperplane of $\ell_2$, which means, according to Proposition \ref{FG}, that $ u_0 j - j \overline u$ cannot be strictly singular; hence $u_0 - \tau$ cannot be strictly singular either; yielding a contradiction.\\

If, on the other hand, $H$ does not contain $\ell_2$, then necessarily $\ell_2 + H =  Z_2$ and thus $H/ (\ell_2\cap H) \simeq (H + \ell_2)/\ell_2 \simeq Z_2/\ell_2 \simeq \ell_2$ and one has the commutative diagram$$\begin{CD}
0@>>> \ell_2\cap  H@>>> H @>>> \ell_2 @>>> 0\\
&&@ViVV @VVV @|\\
0@>>> \ell_2 @>>>  Z_2@>>>  \ell_2 @>>> 0 \\
&&@VVV @VVV\\
&&\R @= \R.
\end{CD}$$
which means that there is a version $\mathscr K'$ of $\mathscr K$ whose image is contained in $\ell_2\cap H$; i.e., $\mathscr K \equiv i \mathscr K'$.\medskip

Let again $u$ be a complex structure on $H$ inducing a complex structure $u_0$ on $\ell_2\cap  H$, and therefore a complex structure $\overline u$ on $\ell_2$ so that there is a commutative diagram
 $$\begin{CD}
0@>>> \ell_2\cap  H@>>> H @>>> \ell_2 @>>> 0\\
&&@Vu_0VV @VVuV @VV{\overline u}V\\
0@>>> \ell_2\cap H @>>>  H@>>>  \ell_2 @>>> 0.
\end{CD}$$

Therefore the couple $(u_0, \overline u)$ is compatible with $\mathscr K'$. Extend $u_0$ to an operator $\tau$ on $\ell_2$ to get a couple $(\tau, \overline u)$ compatible with $\mathscr K$, which yields that $\tau - \overline u$ is compact. On the other hand, $u_0$ is a complex structure on  $\ell_2\cap H$, while $\overline u$ is a complex structure on $\ell_2$ which, by Proposition \ref{FG} implies that $u_0 - \overline u_{|\ell_2\cap H} $ cannot be strictly singular, so $\tau - \overline u$ cannot be strictly singular either, a contradiction.$\Box$

\section{Open problems}

Of course, the main question left open in this paper is

\begin{prob} Do hyperplanes of $Z_2$ admit complex structures?
\end{prob}

And the second could be

\begin{prob} Is there a twisted Hilbert space not admitting a  complex structure?
\end{prob}

Complex and compatible complex structures could be different objects. One could daydream about to what extent all complex structures on $Z_2$ are compatible with some nice representation.

Passing to more specific issues, observe that Proposition \ref{strictcommutator} depends on the explicit form of the Kalton-Peck map. Is this result true for arbitrary  singular quasi-linear maps? Is Corollary \ref{compactZ2}  true in the general case? Precisely

\begin{prob} Assume $\Omega: \ell_2 \to \ell_2$ is a singular quasi-linear map, and that $T,U$ are bounded linear operators on $\ell_2$ such that $(T,U)$ is compatible with $\Omega$. Does it follow that $T-U$ is compact?
\end{prob}

Or, else, keeping in mind that $Z_2$ admits many representations as a twisted sum space, even of non-Hilbert spaces:

\begin{prob} Given an exact sequence $0 \to H \to Z_2 \to H \to 0$, with $H$ Hilbert, is it true  that given $T,U$ bounded linear operators on $H$ such that $(T,U)$ is compatible with the sequence then $T-U$ must be compact?
\end{prob}

\noindent Or, (this formulation is due to G. Godefroy):

\begin{prob} Let $\Omega$ be a (singular) centralizer on $\ell_2$. Do hyperplanes of $d_\Omega \ell_2$ admit compatible complex structures?
\end{prob}

A few other questions specific to Kalton-Peck space remain unsolved:
the first one may give a finer understanding of complex structures on $Z_2$.

\begin{prob} Assume that $(T,U)$ is compatible with Kalton-Peck map $\mathscr K$. Does there exist a compact perturbation $V$ of $T$ (and therefore of $U$) such that $(V,V)$ is compatible with $\mathscr K$? \end{prob}

Two more related although more general problems can be formulated. We conjecture a positive solution to the following problem:

\begin{prob} Show that $Z_2$ admits unique complex structure. \end{prob}

The answer could follow from Proposition \ref{uniq}. In connection with this we do not know the answer to:

\begin{prob} Find a complex space which is isomorphic to its square as a real space but not as a complex space. \end{prob}


\begin{thebibliography}{99}



\bibitem{A} R. Anisca. \emph{Subspaces of $L_p$ with more than one complex structure,} Proc. Amer. Math. Soc. 131 (2003) 2819--2829.

\bibitem{AH} S. A. Argyros, R. G. Haydon.
\emph  {A hereditarily indecomposable $\mathfrak L_{\infty}$-space that solves the scalar-plus-compact problem,} Acta Math. 206 (2011), no. 1, 1--54.

\bibitem{B} S. Banach. \emph {Th\'eorie des op\'erations lin\'eaires,} Monografje Math., Warszawa (1932).


\bibitem{Bergh-Lofstrom} J. Bergh and J. L\"ofstr\"om. \emph { Interpolation spaces. An introduction,}
Springer-Verlag, (1976).


\bibitem{cabetwisted} F. Cabello S\'{a}nchez. \emph{Twisted Hilbert spaces,}
Bull. Austral. Math. Soc. 59 (1999) 17--180.

\bibitem{cabe} F. Cabello S\'{a}nchez,
\emph{There is no strictly singular centralizer on $L_p$,} Proc. Amer. Math. Soc. 142 (2014) 949--955.

\bibitem{cabeann} F. Cabello S\'{a}nchez, \emph{Nonlinear centralizers in homology}, Math. Ann. 358 (2014) 779-798.

\bibitem{cabetrue} F. Cabello S\'{a}nchez, \emph{The noncommutative Kalton-Peck spaces}, preprint 2015.

\bibitem{cabecastdu} F.~Cabello S\'{a}nchez and J.M.F.~Castillo,
\emph{Duality and twisted sums of Banach spaces}, J. Funct. Anal. 175 (2000) 1--16.


\bibitem{cabecastuni} F. Cabello S\'{a}nchez, J.M.F. Castillo, \emph{Uniform boundedness and twisted sums
of Banach spaces}, Houston J. of Math. 30
(2004) 523--536.

\bibitem{ccgs} F. Cabello S\'{a}nchez, J.M.F. Castillo, S. Goldstein, J. Su\'arez, \emph{Twisting noncommutative $L_p$-spaces}, submitted.


\bibitem{cck}F. Cabello S\'{a}nchez, J.M.F. Castillo, N.J. Kalton, \emph{Complex interpolation and twisted twisted Hilbert spaces}, Pacific J. Math. 276 (2015) 287 - 307.

\bibitem{ccs} F. Cabello S\'anchez, J.M.F. Castillo, J. Su\'arez.
\emph { On strictly singular nonlinear centralizers,} Nonlinear Anal.- TMA 75 (2012)  3313--3321.

\bibitem{carro} M. J. Carro, J. Cerd\`{a}, J. Soria, \emph{Commutators and interpolation methods}, Ark. Math. 33 (1995) 199--216.

\bibitem{cfg} J.M.F. Castillo, V. Ferenczi and M. Gonz\'alez, \emph{Singular twisted sums generated by complex interpolation}, Trans. Amer. Math. Soc. to appear.

\bibitem{cfm} J.M.F. Castillo, V. Ferenczi and Y. Moreno, \emph{On Uniformly Finitely Extensible Banach spaces}, J. Math. Anal. Appl. 410 (2014) 670--686.

\bibitem{castgonz}  J.M.F. Castillo, M. Gonz\'alez.
\emph{ Three-space problems in Banach space theory,} Lecture Notes in Math. 1667, Springer  (1997).

\bibitem{castmoreisr} J.M.F. Castillo, Y. Moreno. \emph {On the Lindenstrauss-Rosenthal theorem,} Israel J. Math. 140 (2004) 253--270.

\bibitem{castmorestrict} J.M.F. Castillo and Y. Moreno,
\emph{Strictly singular quasi-linear maps,} Nonlinear Anal. - TMA 49 (2002) 897--904.


\bibitem{castmoredu} J.M.F. Castillo and Y Moreno, \emph{Twisted dualities for Banach spaces},
in Proceedings of "Banach spaces and their applications in
Analysis" in honour of Nigel Kalton (2007); Walter De Gruyter.


\bibitem{castplic} J.M.F. Castillo and A. Plichko,
\emph{Banach spaces in various positions.} J. Funct. Anal. 259
(2010) 2098-2138.

\bibitem{css} J.M.F. Castillo, M. Sim\~{o}es, J. Su\'arez.
\emph{On a Question of Pe\l czy\'nski about Strictly Singular Operators,} Bull. Pol. Acad. Sci. Math. 60 (2012) 27--36.


\bibitem{cjmr} M. Cwikel, B. Jawerth, M. Milman, R. Rochberg.
\emph{  Differential estimates and commutators in interpolation theory in  ``Analysis at Urbana II'', } London Mathematical Society, Lecture Note Series, (E. R. Berkson, N. T. Peck, and J Uhl, Eds.), Vol. 138,
Cambridge University Press, Cambridge, (1989) pp. 170--220.

\bibitem{C} W. Cuellar Carrera. \emph{A Banach space with a countable infinite number of complex structures,}
J. Funct. Anal. 267 (2014) 1462--1487.

\bibitem{D} J. Dieudonn\'e.
\emph {Complex structures on real Banach spaces,} Proc. Amer. Math. Soc. 3 (1952)  162--164.

\bibitem{F} V. Ferenczi.
\emph {Uniqueness of complex structure and real hereditarily indecomposable Banach spaces,}
Adv. Math. 213 (2007)  462--488.

\bibitem{feregale}  V. Ferenczi, E.  Galego.
\emph  {Even infinite-dimensional real Banach spaces}, J. Funct.  Anal. 253 (2007) 534--549.

\bibitem{FG} V. Ferenczi, E. Galego.
\emph {Countable groups of isometries on Banach spaces,} Trans. Amer. Math. Soc. 362 (2010) 4385--4431.

\bibitem{G} W.T. Gowers. \emph { A solution to Banach's hyperplane problem,} Bull. Lond. Math. Soc. 26 (1994) 523--530.

\bibitem{GM} W.T. Gowers, B. Maurey.
\emph {The unconditional basic sequence problem,} J. Amer. Math. Soc. 6 (1993) 851--874.

\bibitem{hiltstam} E. Hilton, K. Stammbach.
\emph{ A course in homological algebra,} GraduateTexts in Math. 4, Springer-Verlag (1970).




\bibitem{kaltloc} N.J. Kalton, \emph{Locally complemented subspaces and
${\mathcal L}_p$ for $p<1$}, Math. Nachr. 115 (1984) 71-97.

\bibitem{kaltmem}  N.J. Kalton, \emph{Nonlinear commutators in interpolation theory},
 Mem. Amer. Math. Soc. 385 (1988).

\bibitem{kaltdiff} N.J. Kalton,
\emph{Differentials of complex interpolation processes for K\"othe function spaces},
Trans. Amer. Math. Soc. 333 (1992) 479--529.

\bibitem{kaltlocb} N.J. Kalton,
\emph{The three space problem for locally bounded F-spaces},
Compositio Mathematica  37 (1978) 243--276.


\bibitem{kal-mon} N. J. Kalton, S Montgomery-Smith.
\emph{ Interpolation of Banach spaces,} in Handbook of the Geometry of Banach spaces vol. 2, W. B.  Johnson and J. Lindenstrauss (eds.) pp. 1131--1175. North-Holland, Amsterdam 2003.

\bibitem{kaltpeck} N. J. Kalton, N. T. Peck.
\emph {Twisted sums of sequence spaces and the three space problem,} Trans. Amer. Math. Soc. 255  (1979) 1--30.

\bibitem{kato} T. Kato.
\emph{ Perturbation theory for nullity, deficiency and other quantities of linear operators,} J. Analyse Math. 8 (1958) 261--322.



\bibitem{lindrose} J. Lindenstrauss and H.P. Rosenthal, \emph{Automorphisms
in $c_0, \ell_1$ and $m$}, Israel J. Math.  9 (1969) 227-239.


\bibitem{lindtzaf} J. Lindenstrauss, L Tzafriri.
\emph {Classical Banach Spaces I, Sequence spaces,} Springer-Verlag (1977).

\bibitem{M} V. D. Milman.
\emph { Some properties of strictly singular operators,} Funktsional. Anal. i Prilozhen. 3, No 1 (1969) pp. 93--94 (in Russian). English translation: Funct. Anal. Appl. 3, No 1 (1969) pp. 77--78.

\bibitem{M2} V. D. Milman.
\emph{ Spectrum of bounded continuous functions specified on a unit sphere in Banach space,}
Funktsional. Anal. i Prilozhen. 3, No 2 (1969) pp. 67--79 (in Russian). English translation: Funct. Anal. Appl. 3, No 2 (1969) pp. 137--146.


\bibitem{moretesis} Y. Moreno Salguero, \emph{Theory of $z$-linear maps},
Ph.D. Thesis, Univ. Extremadura, 2003.


\bibitem{moreplic} Y.~Moreno and A.~Plichko, \emph{On automorphic Banach spaces},
Israel J. Math. 169 (2009) 29--45.

\bibitem{pelc} A. Pe{\l}czy{\'n}ski,
\emph {On strictly singular and strictly cosingular operators. I. Strictly singular and strictly cosingular operators
in $C(S)$-spaces,} Bull. Acad. Polon. Sci. S\'er. Sci. Math. Astronom. Phys. 13 (1965) 31--36.


\bibitem{pliv} A. Plichko, \emph{Superstrictly singular and
superstrictly cosingular operators}, in Functional analysis and
its applications, 239--255, North-Holland Math. Stud., 197,
Elsevier, Amsterdam, 2004.


\bibitem{rochberg} R. Rochberg, \emph{Higher order estimates in complex interpolation
theory}, Pacific J. Math. 174 (1996) 247--267.

\bibitem{rochberg-weiss} R. Rochberg and G. Weiss,
\emph{Derivatives of analytic families of Banach spaces,} Ann. of Math. 118 (1983) 315--347.



\bibitem{sua} J. Su\'arez de la Fuente, \emph{A remark about Schatten classes}, Rocky Mtn. J. 44 (2014) 2093-2102.


\bibitem{S2} S. Szarek. \emph {A superreflexive Banach space which does not admit complex structure,} Proc. Amer. Math. Soc. 97 (1986) 437--444.


\end{thebibliography}
\end{document}